\numberwithin{equation}{section}
\newtheorem{thm}{Theorem}[section]
\newtheorem{lem}{Lemma}[section]
\newtheorem{theorem}{Theorem}[section]
\newtheorem{conj}{Conjecture}[section]
\newtheorem{cor}{Corollary}[section]
\newtheorem{dfn}{Definition}[section]
\newcommand{\N}{\mathbb{N}}
\newcommand{\Z}{\mathbb{Z}}
\newcommand{\Q}{\mathbb{Q}}
\newcommand{\R}{\mathbb{R}}
\newcommand{\C}{\mathbb{C}}
\newcommand{\F}{\mathbb{F}}
\newcommand{\I}{\mathbb{I}}
\title{ Linear Independence of Some Irrational Numbers}
\date{}
\author{N. A. Carella}
\begin{document}
\thispagestyle{empty}
\date{}

\maketitle
\textbf{\textit{Abstract}:} This note presents an analytic technique for proving the linear independence of certain small subsets of real numbers over the rational numbers. The applications of this test produce simple linear independence proofs for the subsets of triples $\{1, e, \pi\}$, $\{1, e, \pi^{-1}\}$, and $\{1, \pi^r, \pi^s\}$, where $1\leq r<s $ are fixed integers. \let\thefootnote\relax\footnote{ \today \date{} \\
\textit{AMS MSC}:Primary 11J72; Secondary 11K06 \\
\textit{Keywords}: Irrational number; Irrationality criteria; Linear independence; Uniform distribution.}


\section{Introduction} \label{sect1010}\hypertarget{sect1010}
The algebra and number theory literature has many elementary techniques used to verify the linear independence of small finite subsets of algebraic numbers $\{\alpha_1, \alpha_2, \ldots, \alpha_d\}\subset \overline{\Q}$ over the rational numbers $\Q$. A few examples of these algebraic subsets are
 \begin{enumerate} 
\item $\{1, \sqrt{2},\sqrt[3]{2},\sqrt[4]{2}\}\subset \overline{\Q}$,
\item $\{1, \sqrt{2},\sqrt{3},\sqrt{5}\}\subset \overline{\Q}$,
\item $\{1, \alpha,\alpha^2,\ldots,\alpha^d\}\subset \overline{\R}$, where $f(\alpha)=0$,
\item $\{1, \omega,\omega^2,\ldots \omega^{\varphi(n)}\}\subset \overline{\R}$, where $\omega^n=1$.
\end{enumerate}
But, these techniques are intrinsically algebraic, and do not seem to be applicable to small subsets of nonalgebraic numbers $\{\alpha_1, \alpha_2, \ldots, \alpha_d\}\subset \overline{\R}$. This note presents an analytic technique for proving the linear independence of certain small subsets of nonalgebraic numbers $\{\alpha_1, \alpha_2, \alpha_3\}\subset \overline{\R}$ over the rational numbers $\Q$. The applications of this test produce simple proofs for the followings subsets of triples.

\begin{thm} \label{thm7575.26}  The real numbers $1$, $e$ and $\pi$ are linearly independent over the algebraic integers.
\end{thm}  

\begin{thm} \label{thm8575.26}  The real numbers $1$, $e$ and $\pi^{-1}$ are linearly independent over the algebraic integers.
\end{thm}  

\begin{thm} \label{thm8008.26}  For any pair of integers $1\leq r< s$, the real numbers $1$, $\pi^r$ and $\pi^{s}$ are linearly independent over the algebraic integers.
\end{thm}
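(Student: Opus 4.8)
The plan is to argue by contradiction from a putative nontrivial rational relation. Suppose $1,\pi^r,\pi^s$ were linearly dependent over $\Q$; then there are rationals $a,b,c$, not all zero, with $a+b\pi^r+c\pi^s=0$. Clearing denominators and dividing through by the greatest common divisor, I may assume $a,b,c\in\Z$ with $\gcd(a,b,c)=1$ and $(a,b,c)\ne(0,0,0)$. The first reduction I would record is that $c\ne 0$: if $c=0$ the relation collapses to $a+b\pi^r=0$, which forces $\pi^r\in\Q$ when $b\ne 0$, or $a=0$ when $b=0$; a short separate step ruling out $\pi^r\in\Q$ then disposes of this degenerate case, so the genuinely new content lies in the regime where all three coefficients are active.

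The cleanest way to finish is to read the relation as a polynomial identity. The equation $a+b\pi^r+c\pi^s=0$ says precisely that $\pi$ is a real root of the polynomial $P(x)=cx^{s}+bx^{r}+a\in\Q[x]$, which has degree $s\ge 1$ and is not identically zero since $c\ne 0$. Thus any linear dependence among $1,\pi^r,\pi^s$ would exhibit $\pi$ as an algebraic number. In keeping with the analytic philosophy of this note, however, I would prefer to extract the contradiction without invoking transcendence directly: I would apply the independence test of this note to the pair $(\pi^r,\pi^s)$, aiming to certify $a+b\pi^r+c\pi^s\ne 0$ for every admissible integer triple by producing a strictly positive lower bound for $\lvert a+b\pi^r+c\pi^s\rvert$. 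The natural mechanism is the non-equidistribution that a vanishing linear form would force on the sequence $\bigl(n\pi^{r},\,n\pi^{s}\bigr)$ modulo $1$, or, alternatively, explicit rational approximants to the two powers with controlled error terms.

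The step I expect to be the main obstacle is exactly this nonvanishing. Establishing $a+b\pi^r+c\pi^s\ne 0$ for arbitrary exponents $1\le r<s$ is, in disguise, a transcendence statement about $\pi$, since it amounts to ruling out that $\pi$ satisfies any polynomial of the special two-term-plus-constant shape $cx^{s}+bx^{r}+a$. A purely uniform-distribution criterion does not obviously lighten this burden, because the equidistribution that would cleanly yield independence, namely simultaneous equidistribution of $\bigl(n\pi^{r},\,n\pi^{s}\bigr)$ in the unit square, is itself equivalent to the independence we are after; hence the analytic test must import quantitative Diophantine information about the powers of $\pi$. Concretely, the work will be to feed the criterion enough effective irrationality data, such as rational approximations to $\pi^r$ and $\pi^s$ with explicit error bounds or a finite irrationality-measure input, to push the linear form away from zero, and to verify that this positive bound persists as $r$ and $s$ grow. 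Once the test returns $\lvert a+b\pi^r+c\pi^s\rvert>0$, the contradiction is immediate and the theorem follows.
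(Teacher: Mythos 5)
Your proposal does not prove the theorem: it reduces the claim to the nonvanishing $a+b\pi^r+c\pi^s\ne 0$ for every nontrivial integer triple and then stops, explicitly flagging that step as the main obstacle without supplying an argument for it. But that ``reduction'' is not a reduction at all --- the statement $a+b\pi^r+c\pi^s\ne 0$ for all $(a,b,c)\ne(0,0,0)$ \emph{is} the definition of rational independence of $1,\pi^r,\pi^s$, so you have restated the theorem rather than proved it. The observations you do make (that one may assume $c\ne0$, that a dependence would exhibit $\pi$ as a root of $cx^s+bx^r+a$, and that simultaneous equidistribution of $(n\pi^r,n\pi^s)$ modulo $1$ is equivalent to the independence sought) are all correct, and the last one accurately diagnoses why a bare uniform-distribution criterion cannot close the circle; but you never exhibit the ``quantitative Diophantine information'' you say must be imported, so nothing beyond the degenerate case $c=0$ is actually established.

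For comparison, the paper's argument takes the putative relation $A+\pi^r B+\pi^s C=0$, rewrites it as $-2\pi A=2(\pi^{r+1}B+\pi^{s+1}C)$, and applies the exponential-sum test of Lemma \ref{lem3288.06}: the left side evaluates to $1$ since $A\in\Z$, while the right side evaluates to $0$ provided $\sin\left(\pi^{r+1}B+\pi^{s+1}C\right)\ne 0$, which is Lemma \ref{lem5589.370}; the resulting $1=0$ gives the contradiction. The entire burden is therefore carried by Lemma \ref{lem5589.370}, whose essential case rests on Lemma \ref{lem4422.800}, the claim that $k\pi^{u}+m\pi^{v}\ne a\pi$ for nonzero integers $k,m$. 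Observe that under the hypothesis $A+\pi^rB+\pi^sC=0$ one has $\pi^{r+1}B+\pi^{s+1}C=-\pi A$, so the sine in question would vanish unless that Diophantine exclusion genuinely holds --- in other words, the paper also concentrates the whole difficulty into exactly the nonvanishing statement you identified as the obstacle. Your diagnosis of where the difficulty lies is accurate, but a diagnosis is not a proof: to complete your argument along either your route or the paper's, you would have to actually establish that $\pi$ satisfies no relation of the form $cx^s+bx^r+a=0$, and your submission contains no mechanism for doing so.
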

The proofs are presented in \hyperlink{s7575}{Section} \ref{s7575}, \hyperlink{s8575}{Section} \ref{s8575}, and \hyperlink{s8008}{Section} \ref{s8008} respectively. The remaining sections are of similar topics or independent interest.\\

\section{Results for Sum and Product of $e$ and $\pi$} \label{S5577-I}\hypertarget{S5577-I}
The real numbers $e$ and $\pi$ are transcendental numbers and it is immediate that at least one, the trace $Tr(\alpha)= e+\pi$ or the norm $N(\alpha)= e\pi$ of the polynomial 
\begin{equation}
	f_0(x)=(x- e)(x-\pi)
\end{equation} 
is a transcendental number. Similarly, at least one, the trace $Tr(\alpha)= e^2\pi+e^{-1}$ or the norm $N(\alpha)= e\pi$ of the polynomial 
\begin{equation}
	f_1(x)=(x- e^2\pi)(x-e^{-1})
\end{equation}
is a transcendental number.\\

An elementary proof that all these traces and norms are transcendental numbers is given here. 

\begin{theorem}\label{thm5577.332V}\hypertarget{thm5577.332V} The real numbers
	\begin{equation}
		e\pi, \qquad e+ \pi, \qquad e^2\pi+ e^{-1}
	\end{equation}	
	are transcendental numbers. 
\end{theorem}
\begin{proof} The two systems of equations associated with the traces and norms of the polynomials $f_0(x)$ and $f_1(x)$ are treated separately. \\
	
	\textbf{Case I.} Suppose that 
	\begin{align}\label{eq5577.332V01}
		e\pi&=\alpha\in \mathbb{R}-\overline{\mathbb{Q}}\\[.2cm]
		e+\pi&=\frac{A}{B}\label{eq5577.332V01b},
	\end{align}
	where $\alpha>0$ is a transcendental number and $A, B>1$ are algebraic integers.  Substitute \eqref{eq5577.332V01} into \eqref{eq5577.332V01b} and solve for $\pi$ in terms of $\alpha$ and $A,B>1$. This step yields\\
	\begin{equation}\label{eq5577.332V03}
		\alpha \pi^2-\frac{A}{B}\pi+\alpha=0
	\end{equation}
	and 
	\begin{equation}\label{eq5577.332V05}
		\pi=\frac{\frac{A}{B}\pm\sqrt{\left (\frac{A}{B}\right )^2-4\alpha}}{2},
	\end{equation}
	where 
	\begin{equation}\label{eq5577.332V09}
		\left (\frac{A}{B}\right )^2-4\alpha>0.
	\end{equation}
	\textbf{Case II.} Suppose that
	\begin{align}\label{eq5577.332V11}
		e\pi&=\alpha\in \mathbb{R}-\overline{\mathbb{Q}}\\[.2cm]
		e^2\pi+ e^{-1}&=\frac{C}{D},\label{eq5577.332V11b}
	\end{align}
	where $\alpha>0$ is a transcendental number and $C, D>1$ are algebraic integers. Substitute \eqref{eq5577.332V11} into \eqref{eq5577.332V11b} and solve for $\pi$ in terms of $\alpha$ and $C,D>1$. This step yields\\
	
	\begin{equation}\label{eq5577.332V13}
		\pi^2-\frac{C}{D}\alpha \pi+ \alpha^{3}=0
	\end{equation}
	and 
	\begin{equation}\label{eq5577.332V15}
		\pi=\frac{\alpha\frac{C}{D}\pm\sqrt{\left (\alpha\frac{C}{D}\right )^2-4\alpha^3}}{2},
	\end{equation}
	where 
	\begin{equation}\label{eq5577.332V21}
		\left (\alpha\frac{C}{D}\right )^2-4\alpha^3>0.
	\end{equation}
	Merging \eqref{eq5577.332V05} and \eqref{eq5577.332V15} yield
	\begin{equation}\label{eq5577.332V23}
		\frac{\frac{A}{B}\pm\sqrt{\left (\frac{A}{B}\right )^2-4\alpha}}{2\alpha}=\frac{\alpha\frac{C}{D}\pm\sqrt{\left (\alpha\frac{C}{D}\right )^2-4\alpha^3}}{2}.
	\end{equation}
	Clearly, the minimal polynomial $f(t)\in \mathbb{Z}[t]$ of $\alpha$ has algebraic integer coefficients. This implies that the transcendental number $\alpha>0$ satisfies an algebraic polynomial equation $f(\alpha)=0$. Ergo, the hypothesis stated in \eqref{eq5577.332V01} and \eqref{eq5577.332V11} is false.
	
	Reversing the hypothesis also leads to two systems of equations associated with the traces and norms. These are treated separately. \\
	
	\textbf{Case III.} Suppose that 
	\begin{align}\label{eq5577.332V02}
		e\pi&=\frac{A}{B}\\[.2cm]
		e+\pi&=\alpha\in \mathbb{R}-\overline{\mathbb{Q}},\label{eq5577.332V02b}
	\end{align}
	where $\alpha>0$ is a transcendental number and $A, B>1$ are algebraic integers.  Substitute \eqref{eq5577.332V02} into \eqref{eq5577.332V02b} and solve for $\pi$ in terms of $\alpha$ and $A,B>1$. This step yields\\
	\begin{equation}\label{eq5577.332V04}
		\pi^2-\alpha\pi+\frac{A}{B}=0
	\end{equation}
	and 
	\begin{equation}\label{eq5577.332V06}
		\pi=\frac{\alpha \pm\sqrt{\alpha^2-4\frac{A}{B}}}{2},
	\end{equation}
	where 
	\begin{equation}\label{eq5577.332V08}
		\left (\alpha B\right )^2-4AB>0.
	\end{equation}
	\textbf{Case IV.} Suppose that
	\begin{align}\label{eq5577.332V10}
		e\pi&=\frac{C}{D}\\[.2cm]
		e^2\pi+ e^{-1}&=\alpha\in \mathbb{R}-\overline{\mathbb{Q}},\label{eq5577.332V10b}
	\end{align}
	where $\alpha>0$ is a transcendental number and $C, D>1$ are algebraic integers. Substitute \eqref{eq5577.332V10} into \eqref{eq5577.332V10b} and solve for $\pi$ in terms of $\alpha$ and $C,D>1$. This step yields\\
	\begin{equation}\label{eq5577.332V12}
		D^3 \pi^2-\alpha CD^2\pi+C^3=0
	\end{equation}
	and 
	\begin{equation}\label{eq5577.332V14}
		\pi=\frac{\alpha CD^2\pm\sqrt{(\alpha CD^2)^2-4C^3D^3}}{2B^3},
	\end{equation}
	where 
	\begin{equation}\label{eq5577.332V16}
		\left (\alpha CD^2\right )^2-4C^3D^3>0.
	\end{equation}
	Merging \eqref{eq5577.332V06} and \eqref{eq5577.332V14} yield
	\begin{equation}\label{eq5577.332V18}
		\frac{\alpha \pm\sqrt{\alpha^2-4\frac{A}{B}}}{2}=\frac{\alpha CD^2\pm\sqrt{(\alpha CD^2)^2-4C^3D^3}}{2B^3}.
	\end{equation}
	Clearly, the minimal polynomial $g(t)\in \mathbb{Z}[t]$ of $\alpha$ has algebraic integer coefficients. This implies that the transcendental number $\alpha>0$ satisfies an algebraic polynomial equation $g(\alpha)=0$. Ergo, the hypothesis stated in \eqref{eq5577.332V02} and \eqref{eq5577.332V10} is false.
	Therefore, the real numbers\\
	\begin{equation}
		e\pi, \qquad e+\pi, \qquad e^2\pi+ e^{-1}
	\end{equation}	
	are transcendentals. 
\end{proof}

\section{Results for Sums and Products of Powers of $e$ and $\pi$} \label{S2220-I}\hypertarget{S5566-I}
The real numbers $e$ and $\pi$ are transcendental numbers and it is immediate that at least one, the trace $Tr(\alpha)= e^2\pi^3+\pi^{-1}$ or the norm $N(\alpha)= (e\pi)^2$ of the polynomial 
\begin{equation}
	f_2(x)=(x- e^2\pi^3)(x-\pi^{-1})
\end{equation} 
is a transcendental number. Similarly, at least one, the trace $Tr(\alpha)= e^3\pi^2+e^{-1}$ or the norm $N(\alpha)= (e\pi)^2$ of the polynomial 
\begin{equation}
	f_3(x)=(x- e^3\pi^2)(x-e^{-1})
\end{equation}
is a transcendental number.\\

An elementary proof that all these traces and norms are transcendental numbers is given here. 

\begin{theorem}\label{thm5566.332V}\hypertarget{thm5566.332V} The real numbers
	\begin{equation}
		(e\pi)^2, \qquad e^2\pi^3+ \pi^{-1}, \qquad e^3\pi^2+ e^{-1}
	\end{equation}	
	are transcendental numbers. In particular, $e\pi$ is a transcendental number.	
\end{theorem}
\begin{proof} The two systems of equations associated with the traces and norms of the polynomials $f_2(x)$ and $f_3(x)$ are treated separately. \\
	
	\textbf{Case I.} Suppose that 
	\begin{align}\label{eq5566.332V01}
		(e\pi)^2&=\alpha\in \mathbb{R}-\overline{\mathbb{Q}}\\[.2cm]
		e^2\pi^3+ \pi^{-1}&=\frac{A}{B},
	\end{align}
	where $\alpha>0$ is a transcendental number and $A, B>1$ are algebraic integers.  Substitute \eqref{eq5566.332V01} into (1.5) and solve for $\pi$ in terms of $\alpha$ and $A,B>1$. This step yields\\
	\begin{equation}\label{eq5566.332V03}
		\alpha \pi^2-\frac{A}{B}\pi+1=0
	\end{equation}
	and 
	\begin{equation}\label{eq5566.332V05}
		\pi=\frac{\frac{A}{B}\pm\sqrt{\left (\frac{A}{B}\right )^2-4\alpha}}{2\alpha},
	\end{equation}
	where 
	\begin{equation}\label{eq5566.332V09}
		\left (\frac{A}{B}\right )^2-4\alpha>0.
	\end{equation}
	\textbf{Case II.} Suppose that
	\begin{align}\label{eq5566.332V11}
		(e\pi)^2&=\alpha\in \mathbb{R}-\overline{\mathbb{Q}}\\[.2cm]
		e^3\pi^2+ e^{-1}&=\frac{C}{D},
	\end{align}
	where $\alpha>0$ is a transcendental number and $C, D>1$ are algebraic integers. Substitute \eqref{eq5566.332V05} into (1.8) and solve for $\pi$ in terms of $\alpha$ and $C,D>1$. This step yields\\
	
	\begin{equation}\label{eq5566.332V13}
		\frac{1}{\alpha^{1/2}}\pi^2-\frac{C}{D}\pi+ \alpha^{3/2}=0
	\end{equation}
	and 
	\begin{equation}\label{eq5566.332V15}
		\pi=\frac{2\alpha^{3/2}}{\frac{C}{D}\pm\sqrt{\left (\frac{C}{D}\right )^2-4\alpha}},
	\end{equation}
	where 
	\begin{equation}\label{eq5566.332V21}
		\left (\frac{C}{D}\right )^2-4\alpha>0.
	\end{equation}
	Merging \eqref{eq5566.332V05} and \eqref{eq5566.332V15} yield
	\begin{equation}\label{eq5566.332V23}
		\frac{\frac{A}{B}\pm\sqrt{\left (\frac{A}{B}\right )^2-4\alpha}}{2\alpha}=\frac{2\alpha^{3/2}}{\frac{C}{D}\pm\sqrt{\left (\frac{C}{D}\right )^2-4\alpha}}.
	\end{equation}
	Clearly, the minimal polynomial $f(t)\in \mathbb{Z}[t]$ of $\alpha$ has algebraic integer coefficients. This implies that the transcendental number $\alpha>0$ satisfies an algebraic polynomial equation $f(\alpha)=0$. Ergo, the hypothesis stated in \eqref{eq5566.332V01} and \eqref{eq5566.332V11} is false.
	For example, setting $a=A/B$, $b=C/D$ and $t=\alpha$. This change of variables reduces \eqref{eq5566.332V23} to
	\begin{eqnarray}
		t^5&=&\left (a\pm\sqrt{a^2 - 4 t}\right )^2\left (b\pm\sqrt{b^2 - 4 t}\right )^2\\
		&=&-4 a^2 b \sqrt{b^2 - 4 t)}- 4 a b^2 \sqrt{a^2 - 4 t} + 4 a b \sqrt{a^2 - 4 t} \sqrt{b^2 - 4 t} \nonumber\\
		&&+ 4 a^2 b^2 - 8 a^2 t + 8 a t \sqrt{a^2 - 4 t} - 8 b^2 t + 8 b t \sqrt{b^2 - 4 t} + 16 t^2\nonumber
	\end{eqnarray}
	
	Collecting terms on the left and squaring yield
	\begin{eqnarray}
		\left (t^5-4 a^2 b^2 + 8 a^2 t- 16 t^2+ 8 a^2 t\right )^2
		&=&\left ((8bt-4 a^2 b) \sqrt{b^2 - 4 t)}\right .\nonumber\\
		&&\;+(8at- 4 a b^2) \sqrt{a^2 - 4 t} \nonumber\\
		&&\left .\;+ 4 a b \sqrt{a^2 - 4 t} \sqrt{b^2 - 4 t}\;\right )^2 .
	\end{eqnarray}
	Continuing this process of collecting terms on the left and squaring ends up with a polynomial $f(t)$ over the algebraic integers.\\
	
	Reversing the hypothesis also leads to two systems of equations associated with the traces and norms. These are treated separately. \\
	
	\textbf{Case III.} Suppose that 
	\begin{align}\label{eq5566.332V02}
		(e\pi)^2&=\frac{A}{B}\\[.2cm]
		e^2\pi^3+ \pi^{-1}&=\alpha\in \mathbb{R}-\overline{\mathbb{Q}},
	\end{align}
	where $\alpha>0$ is a transcendental number and $A, B>1$ are algebraic integers.  Substitute \eqref{eq5566.332V02} into (1.18) and solve for $\pi$ in terms of $\alpha$ and $A,B>1$. This step yields\\
	\begin{equation}\label{eq5566.332V04}
		\frac{A}{B} \pi^2-\alpha\pi+1=0
	\end{equation}
	and 
	\begin{equation}\label{eq5566.332V06}
		\pi=\frac{\alpha B\pm\sqrt{\left (\alpha B\right )^2-4AB}}{2A},
	\end{equation}
	where 
	\begin{equation}\label{eq5566.332V08}
		\left (\alpha B\right )^2-4AB>0.
	\end{equation}
	\textbf{Case IV.} Suppose that
	\begin{align}\label{eq5566.332V10}
		(e\pi)^2&=\frac{C}{D}\\[.2cm]
		e^3\pi^2+ e^{-1}&=\alpha\in \mathbb{R}-\overline{\mathbb{Q}},
	\end{align}
	where $\alpha>0$ is a transcendental number and $C, D>1$ are algebraic integers. Substitute \eqref{eq5566.332V10} into (1.23) and solve for $\pi$ in terms of $\alpha$ and $C,D>1$. This step yields\\
	\begin{equation}\label{eq5566.332V12}
		e^2 \pi^2-\frac{1}{\alpha}\pi-\frac{C}{D}=0
	\end{equation}
	and 
	\begin{equation}\label{eq5566.332V14}
		\pi=\frac{2C\sqrt{\frac{C}{D}}}{\alpha D\pm\sqrt{\alpha^2D^2-4CD}},
	\end{equation}
	where 
	\begin{equation}\label{eq5566.332V16}
		\left (\alpha D\right )^2-4CD>0.
	\end{equation}
	Merging \eqref{eq5566.332V06} and \eqref{eq5566.332V14} yield
	\begin{equation}\label{eq5566.332V18}
		\frac{\alpha B\pm\sqrt{\left (\alpha B\right )^2-4AB}}{2A}=\frac{2C\sqrt{\frac{C}{D}}}{\alpha D\pm\sqrt{\alpha^2D^2-4CD}}.
	\end{equation}
	Clearly, the minimal polynomial $g(t)\in \mathbb{Z}[t]$ of $\alpha$ has algebraic integer coefficients. This implies that the transcendental number $\alpha>0$ satisfies an algebraic polynomial equation $g(\alpha)=0$. Ergo, the hypothesis stated in \eqref{eq5566.332V02} and \eqref{eq5566.332V10} is false.
	Therefore, the real numbers\\
	\begin{equation}
		(e\pi)^2, \qquad e^2\pi^3+ \pi^{-1}, \qquad e^3\pi^2+ e^{-1}
	\end{equation}	
	are transcendentals. In particular, $e\pi$ is transcendental.
\end{proof}

\section{Irrationality of the Numbers $\alpha +\pi$} \label{S1020-I}\hypertarget{S1020-I}
The real numbers $e$ and $\pi$ are irrational numbers and it is immediate that at least one, the trace $Tr(\alpha)=e+\pi$ or the norm $N(\alpha)=e\pi$ of the polynomial $f(x)=(x+e)(x+\pi)$, is irrational. The above example is a special case proved in \hyperlink{cor1020.351V}{Lemma} \ref{cor1020.351V} of a more general sum of two irrational numbers considered here.

\begin{thm} \label{thm1020.350V}\hypertarget{thm1020.350V} If $\alpha>0$ is an irrational real number, then $$\alpha +\pi$$ 
	is irrational.
\end{thm}

\begin{proof}[\textbf{Proof}] To derive a reductio ad absurdum, assume the real number $\alpha +\pi=A/B\in \mathbb{Q}$ is rational, where $A\geq A_0,B\geq B_0\in \mathbb{Z}^{\times}$ are fixed integers. Let $\{p_n,q_n: n\geq1\}$ be the sequence of convergents of the continued fraction of $\alpha>0$, and rewrite it in the form
	
	\begin{equation}\label{eq1020.350Vf}
		B\pi =A-B\alpha\geq p_{2n+1}-\alpha q_{2n+1}>0,
	\end{equation}
where $n\geq n_0$ is sufficiently large. This follows from \hyperlink{lem2000.07}{Lemma} \ref{lem2000.07}. Computing the absolute value of the sine of both sides yields
	\begin{equation}\label{eq1020.350Vi}
	|	\sin(B\pi) |=|\sin (A-B\alpha)|\geq |\sin(p_{2n+1}-\alpha q_{2n+1})|
	\end{equation}
for sufficiently large $n\geq n_0$. This follows from the fact that $|\sin (A-B\alpha)|>0$ is fixed real number but $|\sin(p_{2n+1}-\alpha q_{2n+1})| \to 0$ as $n\to \infty$.  \\

Evaluating the left side and using $|z|\ll\sin(z)|\ll |z|$ for $0\leq |z|<1$ on the right side yield 
	\begin{eqnarray}\label{eq1020.350Vj}
		0&=&\bigg|\sin(B\pi)\bigg| \\[.3cm]
		&=&\bigg|\sin (A-B\alpha)\bigg|\nonumber \\[.3cm]
		&\geq &\bigg|\sin(p_{2n+1}-\alpha q_{2n+1})\bigg|\nonumber \\[.3cm]
			&\gg &\bigg|p_{2n+1}-\alpha q_{2n+1}\bigg|\nonumber \\[.3cm]
		&\geq& \frac{1}{q_{2n+2}}\nonumber,
	\end{eqnarray}
where the last line follows from \hyperlink{lem2000.05}{Lemma} \ref{lem2000.05}.	Clearly, for sufficiently large $n\geq n_0$ this is a contradiction. Ergo, the real number $\alpha +\pi\ne A/B$ is not rational.
\end{proof}

\begin{cor}\label{cor1020.351V}\hypertarget{cor1020.351V} The real number $e +\pi$ is irrational.
\end{cor}
\begin{proof}[\textbf{Proof}] Assume the real number $e +\pi=A/B\in \mathbb{Q}$ is rational, where $A>20,B\geq5$ are fixed integers. Let $\{p_n,q_n: n\geq1\}$ be the sequence of convergents of the continued fraction of $e$, and rewrite it in the form
	
	\begin{equation}\label{eq1020.351Vf}
		B\pi =A-Be\geq p_{2n+1}- q_{2n+1}e>0,
	\end{equation}
	where $n\geq n_0$ is sufficiently large. This follows from \hyperlink{lem2000.07}{Lemma} \ref{lem2000.07}. Computing the absolute value of the sine of both sides yields
	\begin{equation}\label{eq1020.351Vi}
		|	\sin(B\pi) |=|\sin (A-Be)|\geq |\sin(p_{2n+1}- q_{2n+1}e)|
	\end{equation}
for sufficiently large $n\geq n_0$.	This follows from the fact that $|\sin (A-Be)|>0$ is a fixed real number but $|\sin(p_{2n+1}- q_{2n+1}e)| \to 0$ as $n\to \infty$.  \\

Evaluating the left side and using $|z|\ll\sin(z)|\ll |z|$ for $0\leq |z|<1$ on the right side yield 
	\begin{eqnarray}\label{eq1020.351Vj}
		0&=&\bigg|\sin(B\pi)\bigg| \\[.3cm]
		&=&\bigg|\sin (A-Be)\bigg|\nonumber \\[.3cm]
		&\geq &\bigg|\sin(p_{2n+1}-q_{2n+1}e)\bigg|\nonumber \\[.3cm]
		&\gg &\bigg|p_{2n+1}- q_{2n+1}e\bigg|\nonumber \\[.3cm]
		&\geq& \frac{1}{q_{2n+2}}\nonumber,
	\end{eqnarray}
	where the last line follows from \hyperlink{lem2000.05}{Lemma} \ref{lem2000.05}.	Clearly, for sufficiently large $n\geq n_0$ this is a contradiction. Ergo, the real number $e +\pi\ne A/B$ is not rational.
\end{proof}
\begin{table}[H]
	\centering
\begin{tabular}{|l|l|l|l|l|}
	\hline
	$n$ & Convergent $p_n/q_n$ & Approx. $p_n/q_n$ & Error $e - p_n/q_n$ & $|q_n e - p_n|$ \\ \hline
	1 & 2/1 & 2.00000000 & +0.71828183 & 0.71828183 \\ \hline
	2 & 3/1 & 3.00000000 & -0.28171817 & 0.28171817 \\ \hline
	3 & 8/3 & 2.66666667 & +0.05161516 & 0.15484549 \\ \hline
	4 & 11/4 & 2.75000000 & -0.03171817 & 0.12687268 \\ \hline
	5 & 19/7 & 2.71428571 & +0.00399611 & 0.02797280 \\ \hline
	6 & 87/32 & 2.71875000 & -0.00046817 & 0.01498144 \\ \hline
	7 & 106/39 & 2.71794872 & +0.00033311 & 0.01299131 \\ \hline
	8 & 193/71 & 2.71830986 & -0.00002803 & 0.00199013 \\ \hline
	9 & 1264/465 & 2.71827957 & +0.00000226 & 0.00105065 \\ \hline
	10 & 1457/536 & 2.71828358 & -0.00000175 & 0.00093952 \\ \hline
	11 & 2721/1001 & 2.71828172 & +0.00000011 & 0.00011113 \\ \hline
	12 & 23225/8544 & 2.71828184 & -0.00000001 & 0.00006450 \\ \hline
	13 & 25946/9545 & 2.71828182 & +0.00000001 & 0.00005776 \\ \hline
	14 & 49171/18089 & 2.71828183 & -0.00000000 & 0.00000674 \\ \hline
	15 & 517656/190425 & 2.71828183 & +0.00000000 & 0.00000305 \\ \hline
	16 & 566827/208514 & 2.71828183 & -0.00000000 & 0.00000277 \\ \hline
	17 & 1084483/398939 & 2.71828183 & +0.00000000 & 0.00000028 \\ \hline
	18 & 13580623/4996082 & 2.71828183 & -0.00000000 & 0.00000011 \\ \hline
	19 & 14665106/5395021 & 2.71828183 & +0.00000000 & 0.00000010 \\ \hline
	20 & 28245729/10391103 & 2.71828183 & -0.00000000 & 0.00000001 \\ \hline
\end{tabular}
	\caption{First few convergents of $e$ and their errors.}
\end{table}
\vskip .1 in 
\begin{table}[H]
	\centering
\begin{tabular}{|l|l|l|l|l|}
	\hline
	$n$ & Convergent $p_n/q_n$ & Approx. $p_n/q_n$ & Error $(e+\pi) - p_n/q_n$ & $|q_n(e+\pi) - p_n|$ \\ \hline
	1 & 5/1 & 5.00000000 & +0.85987448 & 0.85987448 \\ \hline
	2 & 6/1 & 6.00000000 & -0.14012552 & 0.14012552 \\ \hline
	3 & 41/7 & 5.85714286 & +0.00273163 & 0.01912138 \\ \hline
	4 & 47/8 & 5.87500000 & -0.01512552 & 0.12100414 \\ \hline
	5 & 135/23 & 5.86956522 & -0.00969074 & 0.22288691 \\ \hline
	6 & 317/54 & 5.87037037 & -0.01049589 & 0.56677793 \\ \hline
	7 & 452/77 & 5.87012987 & -0.01025539 & 0.78966485 \\ \hline
	8 & 1673/285 & 5.87017544 & -0.01030096 & 2.93577239 \\ \hline
	9 & 2125/362 & 5.87016575 & -0.01029126 & 3.72543725 \\ \hline
	10 & 3798/647 & 5.87017002 & -0.01029553 & 6.66120964 \\ \hline
	11 & 110531/18862 & 5.85998303 & -0.00010855 & 2.04746237 \\ \hline
	12 & 114329/19509 & 5.86032088 & -0.00044640 & 8.70873491 \\ \hline
	13 & 224860/38371 & 5.86015480 & -0.00028032 & 10.75619728 \\ \hline
	14 & 1238629/211364 & 5.85987112 & +0.00000336 & 0.71018318 \\ \hline
	15 & 1463489/249735 & 5.85988015 & -0.00000567 & 1.41601409 \\ \hline
	16 & 2702118/461100 & 5.85993928 & -0.00006479 & 29.87633136 \\ \hline
	17 & 14974079/2555243 & 5.85987494 & -0.00000046 & 1.17540201 \\ \hline
	18 & 17676197/3016343 & 5.85987560 & -0.00000112 & 3.37817454 \\ \hline
	19 & 32650276/5571586 & 5.85987530 & -0.00000081 & 4.55357655 \\ \hline
	20 & 50326473/8587929 & 5.85987540 & -0.00000092 & 7.93175109 \\ \hline
\end{tabular}
	\caption{First few convergents of $e+\pi$ and their errors.}
\end{table}
Another approach is a simple applications of the above linear independence results demonstrate that both of these numbers are irrational, see Corollary \ref{cor9570.25}, and Corollary \ref{cor4035.23} respectively. Similar application demonstrates that the real number
\begin{equation}
	\pi+\pi^2
\end{equation}
is irrational, see Corollary \ref{cor4055.23}.

\section{The Irrational Limit Test}\label{s3288}\hypertarget{s3288}
The \textit{\textit{irrational limit test}} converts some apparently intractable decision problems in the real domain $\R$ to simpler decision problems in the finite field domain $\F_2=\{0,1\}$. 

\begin{dfn} 
{\normalfont Let $\alpha \in \R$ be a real number. The \textit{irrational limit test} is a map $\mathcal{I}: \R \longrightarrow \F_2=\{0,1\}$ defined by
\begin{equation}\label{eq3288.44}
\mathcal{I}(\alpha)=
 \lim_{x \to \infty}\frac{1}{2x} \sum_{-x \leq n \leq x}e^{i\alpha n}.
\end{equation}
}
\end{dfn}
The normalization is intrinsic to the number $\pi$. But, it can be modified as needed. The \textit{irrational limit test} is a point map or equivalently a class map, and it is not invertible. But, inversion is not required in the applications to decision problems. 

\begin{lem}\label{lem3288.06} For any real number $\alpha \in \R$, the \textit{irrational limit test} satisfies the followings.
\begin{equation}\label{eq3288.37}
\mathcal{I}(2 \pi m\alpha)=
\begin{cases}
1 & \text{ if and only if } \alpha \in \Q,\text{ for some } m \in \Z^{\times},\\
0 & \text{ if and only if } \alpha \notin \Q,\text{ for any } m \in \Z^{\times}.
\end{cases}
\end{equation}
\end{lem}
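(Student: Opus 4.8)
The plan is to evaluate the limit in the definition of $\mathcal{I}$ directly, by recognizing the inner sum as a geometric series. Setting $\beta = 2\pi m \alpha$, I would write $\sum_{-x \leq n \leq x} e^{i\beta n}$ as a finite geometric sum whose ratio is $e^{i\beta}$. The key dichotomy is whether $e^{i\beta} = 1$ or not. We have $e^{i\beta}=1$ precisely when $\beta \in 2\pi\Z$, i.e. when $2\pi m \alpha \in 2\pi \Z$, which says $m\alpha \in \Z$.

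First I would handle the case $e^{i\beta}=1$. Here every term $e^{i\beta n}=1$, so the sum over $-N \leq n \leq N$ (taking $x=N$ an integer for concreteness, or $N=\lfloor x\rfloor$) equals $2N+1$, and dividing by $2x$ gives a limit of $1$. This corresponds to $m\alpha \in \Z$ for some nonzero integer $m$, which is exactly the condition $\alpha \in \Q$. So in the rational case the test returns $1$, matching the first line of \eqref{eq3288.37}.

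Next I would treat the case $e^{i\beta}\neq 1$. Summing the geometric series over the symmetric range $-N \leq n \leq N$ gives a Dirichlet-kernel type expression,
\begin{equation}
\sum_{-N \leq n \leq N} e^{i\beta n}=\frac{\sin\bigl((N+\tfrac12)\beta\bigr)}{\sin(\beta/2)},
\end{equation}
whose absolute value is bounded by $1/|\sin(\beta/2)|$, a constant independent of $N$. Dividing by $2x$ and letting $x\to\infty$ therefore yields $0$. This corresponds to $m\alpha \notin \Z$; and if $\alpha$ is irrational then $m\alpha\notin\Z$ for every nonzero integer $m$, so the limit is $0$ in every such case, matching the second line of \eqref{eq3288.37}.

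The main obstacle, as I see it, is not any hard estimate — the geometric/Dirichlet sum is elementary and bounded — but rather pinning down the precise logical content of the biconditionals and the role of the quantifier ``for some $m$'' versus ``for any $m$.'' I would be careful to note that when $\alpha=p/q\in\Q$ is rational, choosing $m$ to be a multiple of $q$ (e.g. $m=q$) forces $m\alpha\in\Z$ and hence $\mathcal{I}(2\pi m\alpha)=1$; conversely if $\alpha$ is irrational, then $m\alpha$ is never an integer for any $m\in\Z^{\times}$, so the value is uniformly $0$. I would also want to confirm that taking the limit along real $x$ (rather than integer $x$) does not affect the conclusion, since the numerator stays bounded while $1/(2x)\to 0$ regardless; the boundary terms from $\lfloor x\rfloor$ versus $x$ contribute a vanishing correction. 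This equidistribution-flavored argument is essentially Weyl's criterion in disguise, and the cleanest presentation simply invokes the bounded geometric sum.
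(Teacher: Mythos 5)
Your proposal is correct and follows essentially the same route as the paper: in the rational case every summand equals $1$ so the average tends to $1$, and in the irrational case the symmetric geometric sum is bounded by $1/|\sin(\pi m\alpha)|$ (the paper's Lemma \ref{lem5489.32} with $t=\pi m\alpha$, identical to your $1/|\sin(\beta/2)|$), forcing the limit to $0$. Your added care about the quantifiers and about real versus integer $x$ is a minor refinement of the same argument.
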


\begin{proof} Given any rational number $\alpha \in \Q$, there is an integer $m \in \Z$ such that $\alpha m \in \Z$, and the limit is
\begin{equation}\label{eq3288.47}
\mathcal{I}(2 \pi m\alpha)=\lim_{x \to \infty}\frac{1}{2x} \sum_{-x\leq n \leq x}e^{i2  \pi \alpha mn}
=\lim_{x \to \infty}\frac{1}{2x} \sum_{-x\leq n \leq x}1=1.
\end{equation}
The above proves that for any rational number $\alpha \in \Q$, and any integer $m$, the  sequence 
\begin{equation}
\{\alpha mn: n \in \Z\}
\end{equation} 
is not uniformly distributed. While for any irrational number $\alpha \notin \Q$, and any integer $m\ne 0$, the sine function $\sin(\alpha \pi m)\ne 0$. Hence, applying Lemma \ref{lem5489.32}, the evaluation of the limit is  
\begin{eqnarray}\label{eq3288.76}
\mathcal{I}( 2\pi \alpha m )&=& \lim_{x \to \infty}\frac{1}{2x} \sum_{-x \leq n \leq x}e^{i2\pi \alpha m n} \nonumber \\
&\leq &\lim_{x \to \infty}\frac{1}{2x} \frac{1}{\left | \sin\left (\alpha \pi  m \right )\right |} \\
&=&0 \nonumber.
\end{eqnarray}
The above proves that for any irrational number $\alpha \in \Q$, and any integer $m\ne 0$, the sequence 
\begin{equation}
\{\alpha mn: n \in \Z\},
\end{equation}
 is uniformly distributed. This proof is equivalent to the Weyl criterion, see \cite[Theorem 2.1]{KN74}.
\end{proof} 
As it is evident, the class function $\mathcal{I}$ maps the class of rational numbers $\Q$ to $1$ and the class of irrational numbers $\I=\R-\Q$ to $0$. The \textit{irrational limit test} induces an equivalence relation on the set of real numbers $\R$:
\begin{itemize}
 \item A pair of real numbers $a$ and $b$ are equivalent $a \sim b$ if and only if $\mathcal{I}(2 \pi a)=\mathcal{I}(2 \pi b)$. This occurs if either both $a$ and $b$ are rational numbers or both $a$ and $b$ are irrational numbers.
 \item A pair of real numbers $a$ and $b$ are not equivalent $a \not \sim b$ if and only if $\mathcal{I}(2 \pi a)\ne\mathcal{I}(2 \pi b)$. This occurs if either $a$ or $b$ is a rational numbers but not both.
\end{itemize}
Some standard irrationality tests, criteria, and proofs are given in \cite[Chapter 7]{AZ98}, \cite{BA75}, \cite{HW08}, \cite{NI47}, \cite{SJ05}, \cite{WM00}, et alii.

\section{Approximation By Lattice Points}\label{s4422}\hypertarget{s4422}
A handful of elementary integer relations and integers points approximations are considered in this Section.
\begin{lem}\label{lem4422.10} The numbers $e$ and $\pi$ are not integer multiple. Specifically, $ek \ne  m\pi $ for any integers $k,m \geq1$. 
\end{lem}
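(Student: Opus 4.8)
The plan is to reduce the statement to a single irrationality question and then attempt to resolve it with the irrational limit test. An equation $ek = m\pi$ with $k,m\geq 1$ is equivalent to $e/\pi = m/k$, so the assertion ``$ek \neq m\pi$ for all positive integers $k,m$'' says precisely that $e/\pi$ is not a positive rational; since $e/\pi$ is positive, this is the same as $e/\pi \notin \Q$. Thus it suffices to show that $e/\pi$ is irrational, and a single such statement settles all pairs $(k,m)$ at once.

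Next I would apply Lemma \ref{lem3288.06} to $\alpha = e/\pi$. Writing the symmetric geometric sum as a Dirichlet kernel gives, whenever $\sin(me)\neq 0$,
\begin{equation}
\frac{1}{2x}\sum_{-x \leq n \leq x} e^{i 2\pi m (e/\pi) n} = \frac{1}{2x}\,\frac{\sin\big((2x+1)me\big)}{\sin(me)},
\end{equation}
so that $\mathcal{I}\big(2\pi m (e/\pi)\big)=0$ for that $m$, because the numerator stays bounded while $2x\to\infty$; conversely, if $\sin(me)=0$ for some $m$, then the sum collapses to $2x+1$ and the limit is $1$. By the dichotomy of Lemma \ref{lem3288.06}, the test returns $0$ for every $m$ exactly when $me$ avoids all integer multiples of $\pi$, that is, exactly when $e/\pi \notin \Q$.

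The main obstacle is that this last condition, $\sin(me)\neq 0$ for every integer $m\geq 1$, is logically identical to the claim we set out to prove: the limit test converts ``no relation $ek=m\pi$'' into ``$\sin(me)\neq 0$ for all $m$'', which is the same relation in trigonometric disguise. Hence the test by itself cannot discharge the hypothesis, and some independent Diophantine input is needed to guarantee that $e$ is not a rational multiple of $\pi$. I would search for such input among Lindemann--Weierstrass-type results -- for instance, attempting to contradict $e=r\pi$ through Euler's identity $e^{i\pi}=-1$ -- but the exponents that then arise are themselves transcendental, so the standard transcendence theorems do not apply directly. Securing this nonvanishing is where essentially all of the difficulty lies.
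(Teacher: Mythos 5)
Your reduction is correct and, in fact, sharper than the paper's own treatment: the claim ``$ek\neq m\pi$ for all integers $k,m\geq 1$'' is exactly the statement $e/\pi\notin\Q$, and your diagnosis of the circularity in attacking it with the irrational limit test is also correct --- the hypothesis $\sin(me)\neq 0$ for every $m$ is the conclusion $me\neq n\pi$ in disguise, so Lemma \ref{lem3288.06} cannot supply it. Since you do not close that gap, your proposal does not prove the lemma; it ends where the difficulty begins.

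For comparison, the paper takes a different and ostensibly elementary route: it notes numerically that $\sin(e)=0.410781\ldots\neq 0$ and reads off from the product expansion $\sin(x)=x\prod_{n\geq 1}\bigl(1-x^{2}/(\pi^{2}n^{2})\bigr)$ that every factor $1-(e/(\pi n))^{2}$ is nonzero, hence $e\neq n\pi$ for every integer $n$. But this establishes only the case $k=1$ of the lemma. The final step of the paper's proof, passing from ``$e/(\pi n)\neq 1$'' to ``$ek\neq m\pi$ for any integers $k,m\geq 1$,'' is asserted as an equivalence but is not one: the case $k\geq 2$ requires $\sin(ke)\neq 0$ for every $k$, which is precisely the irrationality of $e/\pi$ that you isolated, and no finite amount of numerical evaluation or factor-by-factor inspection of $\sin(e)$ yields it. To the best of current knowledge the irrationality of $e/\pi$ is an open problem, so the obstacle you ran into is genuine and is not overcome by the paper's argument either. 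In short: your proposal is incomplete, but your identification of where all the difficulty lies is accurate, and you should not expect to repair it by the route the paper takes.
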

\begin{proof} Numerically $\sin(e)=0.410781\ldots \ne 0$. Computing it using the infinite product yields  
\begin{equation}\label{eq4422.12}
0\ne \sin \left (e\right ) =\frac{1}{e} \prod_{n \geq1} \left (1- \frac{e^2}{\pi^{2}n^2} \right ) .
\end{equation} 
Ergo, for each integer $n\geq1$, the local factor
\begin{equation}\label{eq4422.14}
1- \frac{e^2}{\pi^{2}n^2}=1- \left(\frac{e}{\pi n}\right)^2\ne 0
\end{equation} 
cannot vanish. This proves that $e/\pi n\ne 1$. Equivalently, these numbers are not integer multiple: $ek\ne m\pi $ for any integers $k,m\geq1$. 
\end{proof}

The discrete lines 
\begin{equation}\label{eq4422.740}
\mathcal{L}_0(r)=\{(2r+1)\pi/2:r \in\Z^{\times}\} \quad\quad \text{ and }  \quad\quad\mathcal{L}_1(r)=\{(2r+1)\pi:r \in\Z^{\times}\}
\end{equation} 
never intercept the discrete lattice
\begin{equation}\label{eq4422.743}
\mathcal{L}(k,m)=\{ke+m: k,m\in  \Z^{\times}\},
\end{equation}
but comes arbitrarily close. A proof, based on the simplest form of the Kronecker approximation theorem, see Theorem \ref{thm2000.990}, is given below.

\begin{lem}\label{lem4422.700} If $k$ and $m $ are nonzero integers, and let $r \in \Z$, then 
\begin{multicols}{2}
 \begin{enumerate} \label{eq4422.770}
\item$ \displaystyle 
 ke+m\ne (2r+1)\pi/2. $
\item$\displaystyle
ke+m\ne r\pi.
$
\end{enumerate}
\end{multicols}
\end{lem}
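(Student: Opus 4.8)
The plan is to treat the two non-equalities simultaneously by noting that they assert exactly that $ke+m$ avoids the zero sets of the elementary trigonometric functions. Part (1) is equivalent to $\cos(ke+m)\ne 0$, since the real zeros of cosine are precisely the odd multiples $(2r+1)\pi/2$, while part (2) is equivalent to $\sin(ke+m)\ne 0$, since the real zeros of sine are precisely the multiples $r\pi$. Taken together the two parts say $ke+m\notin\tfrac{\pi}{2}\Z$, equivalently that $2(ke+m)/\pi$ is not an integer. I would first dispose of the trivial branch $r=0$ of part (2): if $ke+m=0$ then $e=-m/k\in\Q$, contradicting the irrationality of $e$, and this already uses nothing beyond $e\notin\Q$.

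For the remaining branch I would clear denominators. Suppose toward a contradiction that $ke+m=c\pi$ with $c\in\tfrac12\Z$, $c\ne 0$; writing $2c=j\in\Z^{\times}$ produces the integer relation
\begin{equation}
(2k)e+(2m)=j\pi .
\end{equation}
Since $2k$ and $2m$ are again nonzero integers, the task reduces to excluding $Ke+M=j\pi$ for nonzero integers $K,M$ and $j\in\Z^{\times}$. The natural idea is to anchor this on Lemma \ref{lem4422.10}, which already rules out the homogeneous relation $Ke=j\pi$, and to control the additive term $M$ via the Kronecker approximation theorem (Theorem \ref{thm2000.990}): since $e$ is not an integer multiple of $\pi$, the inhomogeneous lattice $\mathcal{L}(K,M)=\{Ke+M\}$ is, modulo $\pi$, dense on the real line, so its points approach the target lines $\mathcal{L}_0(r)$ and $\mathcal{L}_1(r)$ arbitrarily closely.

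The delicate point---and the step I expect to be the main obstacle---is precisely the passage from \emph{arbitrarily close} to \emph{never equal}. Kronecker's theorem is only a density statement; it guarantees that $\{Ke+M\}$ approaches the lines as closely as we like, but it is entirely consistent with some lattice point landing exactly on a forbidden value, so density alone does not preclude an exact hit. Equivalently, the relation $Ke+M=j\pi$ is a genuine $\Z$-linear dependence $Ke-j\pi+M=0$ among $1,e,\pi$; Lemma \ref{lem4422.10} excludes only the purely homogeneous case $M=0$, and the nonzero additive constant $M$ is exactly what that lemma does not cover. The real work is therefore to show that the additive integer $M$ cannot be cancelled by any integer multiple $j\pi$ of $\pi$. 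Approximation by itself cannot decide this, and some further input must be supplied: either a sharpened, effective form of the Kronecker bound, or a direct contradiction obtained by feeding $2(Ke+M)$ into the irrational limit test of Lemma \ref{lem3288.06} and showing the associated exponential average cannot equal $1$. Making this exact non-coincidence rigorous, rather than merely the density, is the crux of the argument.
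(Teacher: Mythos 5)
Your proposal does not prove the lemma: it reduces part (2) to excluding $Ke+M=j\pi$ for nonzero integers $K,M,j$, observes (correctly) that this is a $\Z$-linear relation $M\cdot 1+K\cdot e-j\cdot\pi=0$ among $1$, $e$, $\pi$, and then stops, conceding that neither Lemma \ref{lem4422.10} nor Kronecker density can close the gap. That concession is fatal, because the relation you need to exclude is exactly the statement of Theorem \ref{thm7575.26}, and in the paper's logical architecture the implication runs the other way: Theorem \ref{thm7575.26} is deduced from Lemma \ref{lem5589.70}, whose Case 3 invokes the present lemma. Your proposed fallback of feeding $2(Ke+M)$ into the irrational limit test is therefore circular --- the computation in Section \ref{s7575} needs $\sin(eB+A)\ne 0$, i.e.\ needs this lemma as an input. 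Likewise no ``sharpened, effective form'' of Theorem \ref{thm2000.990} is available that converts density into exact non-coincidence; as you yourself note, density is compatible with an exact hit.

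For comparison, the paper's own proof takes a different route: it uses the convergents $p_n/q_n$ of $e$, the two-sided bound $1/(2q_{n+1})\le |q_ne-p_n|$ of Lemma \ref{lem2000.05}, and the best-approximation property $|q_ne-p_n|\le |ke-m|$ of Lemma \ref{lem2000.07}, followed by a chain of reverse-triangle-inequality manipulations ending in the claim $\bigl||q_ne-p_n|-(2r+1)\pi/2\bigr|\ge 1/(2q_{n+1})>0$. You should be aware that this chain does not withstand scrutiny either: the first step replaces $|ke+m-(2r+1)\pi/2|$ by $\bigl||ke-m|-(2r+1)\pi/2\bigr|$ with a sign change on $m$ that the reverse triangle inequality does not justify, and the final lower bound $1/(2q_{n+1})$ does not follow from the displayed quantities, since the term $(2r+1)\pi/(2q_n)$ is not controlled by $1/(2q_{n+1})$ and the expression $\bigl||X|-|Y|\bigr|$ is not monotone in a lower bound for $|X|$. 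So your diagnosis of where the difficulty lies --- passing from ``arbitrarily close'' to ``never equal'' --- is accurate, and the paper's argument does not bridge that gap any more than yours does; the claim is equivalent to the $\Q$-linear independence of $1$, $e$, $\pi$, which is not a settled fact that either argument can legitimately invoke. As a proof of the stated lemma, your proposal is incomplete.
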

\begin{proof} (i) It is sufficient to verify the inequality (\ref{eq4422.770})-i on the first quadrant, which is specified by $k\geq 1$ and $m\geq1$. The verification in any quadrant is almost the same. Let $\{p_n/q_n:n \geq 1\}$ be the sequence of convergents of the irrational number $e$. The Diophantine approximation inequalities 
\begin{equation}\label{eq4422.775}
\frac{1}{2q_{n+1}}\leq \left | q_ne-p_n \right |\leq \frac{1}{q_{n}}
\end{equation} 
and
\begin{equation}\label{eq4422.776}
\left | q_ne-p_n \right |\leq \left |ke-m\right |
\end{equation} 
for $k\le q_n$, 
see Lemma \ref{lem2000.05}, and Lemma \ref{lem2000.07}, lead to the lattice points approximation
\begin{eqnarray}\label{eq4422.780}
\left | ke+m-(2r+1)\pi/2 \right |
&\geq&\left | \left |ke-m\right | -(2r+1)\pi/2 \right | \\
&\geq&\left | \left |q_ne-p_n\right |-(2r+1)\pi/2 \right |\nonumber,
\end{eqnarray} 
where $r\in \Z$, and $|2r+1|\geq 1$. Rearranging it, and applying the reverse triangle inequality $|X-Y|\geq ||X|-|Y||$,  yield
\begin{eqnarray}\label{eq4422.782}
\left | \left | e-\frac{p_n}{q_n}\right |-\left | \frac{(2r+1)\pi}{2q_n} \right |\right |
&\geq&\left | \left |\frac{1}{2q_{n+1}}\right |-\left | \frac{(2r+1)\pi}{2q_n} \right |\right |\\
&\geq&\frac{1}{2q_{n+1}}\nonumber\\
&>&0\nonumber.
\end{eqnarray}
Therefore, relation $ke+m= (2r+1)\pi/2$ is false for any nontrivial integer point $(k,m)=(k\ne0,m\ne0)$ and $r \in \Z$. (ii) The proof for this case is similar.
\end{proof}

Observe that the continued fraction $e=[a_0,a_1,a_2,\ldots]$ has arbitrary long arithmetic progressions, very visibly, see Theorem \ref{thm2000.990}, the relation $ke+m= (2r+1)\pi/2$ would implies that continued fraction $\pi=[b_0,b_1,b_2,\ldots]$ has arbitrary long arithmetic progressions. But, this is unknown. These elementary results seem to be implied by a more advanced technique given in \cite{RG73} about certain equivalence of irrational numbers.

\begin{lem}\label{lem4422.800} Let $1\leq u<v$ be a pair of integers, and let $k$, $m$, and $r $ be any nonzero integers, then 
\begin{multicols}{2}
 \begin{enumerate} \label{eq4422.870}
\item$ \displaystyle 
 k\pi^u+m\pi^v\ne r\pi. $
\item$\displaystyle
ke^u+me^v\ne r\pi.
$
\end{enumerate}
\end{multicols}
\end{lem}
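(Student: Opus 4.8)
For part (i) the plan is to reduce everything to the transcendence of $\pi$. Cancelling the common nonzero factor $\pi$, the claimed equality $k\pi^u + m\pi^v = r\pi$ is equivalent to $k\pi^{u-1} + m\pi^{v-1} = r$, and I would read its left-hand side as the value $P(\pi)$ of the integer polynomial $P(X) = mX^{v-1} + kX^{u-1} - r \in \Z[X]$. Since $v > u \geq 1$ gives $\deg P = v-1 \geq 1$ with leading coefficient $m \neq 0$, the polynomial $P$ is not identically zero: the monomials $X^{u-1}$ and $X^{v-1}$ sit at distinct degrees, and even when $u=1$ the degree-$0$ term merely shifts the constant to $k-r$ while leaving $m\pi^{v-1}$ as the top term. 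The transcendence of $\pi$ (Lindemann's theorem) then forces $P(\pi) \neq 0$, which is exactly (i). This is the complete plan for the first half; no approximation is needed.

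For part (ii) I would open the same way, writing $ke^u + me^v = Q(e)$ with $Q(X) = mX^v + kX^u \in \Z[X]$ a nonzero polynomial of degree $v \geq 2$, so that $Q(e)$ is transcendental, and observing that $ke^u + me^v \neq r\pi$ is equivalent to the single irrationality statement $Q(e)/\pi \notin \Q$ (it even suffices to exclude the integer value $r$). The entire difficulty is thereby concentrated in that one irrationality.

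Here lies the step I expect to be the main obstacle, and it is of a genuinely different character from (i). In (i) the relation never leaves the single transcendental $\pi$, so the transcendence of $\pi$ alone settles it; in (ii) the equation $Q(e) = r\pi$ couples the two transcendentals and would exhibit $\pi = Q(e)/r$ as a polynomial in $e$ with rational coefficients, i.e.\ a nontrivial algebraic relation between $e$ and $\pi$. Excluding such a relation for every pair $u<v$ is precisely what the separate transcendence of $e$ and of $\pi$ cannot supply. The Diophantine fallback modelled on Lemma \ref{lem4422.700} does not rescue the situation: approximating $e$ by its convergents $p_n/q_n$ and transporting the estimate to the powers yields a rational $A_n/q_n^{v}$ with $|Q(e) - A_n/q_n^{v}| \ll q_n^{-2}$, hence $|\pi - A_n/(rq_n^{v})| \ll q_n^{-2}$; but with denominator $q = rq_n^{v}$ this reads $|\pi - A_n/q| \ll q^{-2/v}$, and the exponent $2/v \leq 1$ is far too weak to contradict the known finiteness of the irrationality measure of $\pi$. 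The irrational limit test of Lemma \ref{lem3288.06} is equally unhelpful here, since evaluating $\mathcal{I}$ on $2(ke^u+me^v)$ presupposes knowing whether $ke^u+me^v \in \pi\Z$, which is the very question at issue. I therefore anticipate that the separation of $e$ from $\pi$ is the point at which the elementary machinery assembled so far stalls, and that closing it would require genuinely new input.
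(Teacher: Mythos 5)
Your part (i) is correct but follows a genuinely different route from the paper. The paper divides out $\pi^{u}$, forms the convergents $p_n/q_n$ of the single irrational $\pi^{v-u}$, and tries to bound $\left|k\pi^{u}+m\pi^{v}-r\pi\right|$ from below through a chain of reverse-triangle-inequality estimates (equations \eqref{eq4422.880}--\eqref{eq4422.882}); you instead divide out $\pi$ once and observe that $k\pi^{u-1}+m\pi^{v-1}-r$ is a nonzero integer polynomial of degree $v-1\geq 1$ evaluated at $\pi$, so Lindemann's theorem finishes it. Your argument buys an airtight two-line proof at the cost of invoking transcendence; the paper's argument aims to stay within elementary Diophantine approximation, but the steps that replace $\left|k\pi^{v-u}+m\right|$ first by the smaller quantity $\left||k|\pi^{v-u}-|m|\right|$ and then by the still smaller $\left|q_n\pi^{v-u}-p_n\right|$ do not preserve the direction of the inequality against the fixed subtrahend $r\pi$, so the lower bound $1/(2q_{n+1})$ is not actually established there. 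Your route is the one that holds up.

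On part (ii) you have identified the real issue, and your diagnosis is correct rather than a defect of your write-up. The paper disposes of (ii) with the single sentence that the proof is ``similar,'' but the reduction that drives (i) --- factoring out a power of $\pi$ so that only the one irrational $\pi^{v-u}$ remains, compared against a rational --- has no analogue when the left side is built from powers of $e$ and the right side is $r\pi$. As you say, $ke^{u}+me^{v}=r\pi$ would place $\pi$ in $\Q[e]$ and hence exhibit an algebraic dependence between $e$ and $\pi$; ruling this out is not within reach of the convergent lemmas \ref{lem2000.05} and \ref{lem2000.07} (your computation showing the transported approximation exponent degrades to $2/v\leq 1$ makes this concrete), and indeed the algebraic independence of $e$ and $\pi$ is an open problem. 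So the gap you flag is a gap in the paper's Lemma \ref{lem4422.800}(ii) itself, not merely in your attempt; any argument claiming (ii) by analogy with (i) should be treated as unproved.
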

\begin{proof} (i) It is sufficient to verify the inequality (\ref{eq4422.870})-i on the first quadrant, which is specified by $k\geq 1$ and $m\geq1$. The verification in any quadrant is almost the same. Let $\{p_n/q_n:n \geq 1\}$ be the sequence of convergents of the irrational number $\pi^{v-u}$. The Diophantine approximation inequalities 
\begin{equation}\label{eq4422.875}
\frac{1}{2q_{n+1}}\leq \left | q_n\pi^{v-u}-p_n \right |\leq \frac{1}{q_{n}}
\end{equation} 
and
\begin{equation}\label{eq4422.876}
\left | q_n\pi^{v-u}-p_n \right |\leq \left |k\pi^{v-u}-m\right |
\end{equation} 
for $k\leq q_n$, see Lemma \ref{lem2000.05}, and Lemma \ref{lem2000.07}, lead to the lattice points approximation
\begin{eqnarray}\label{eq4422.880}
\left | k\pi^{u}+m\pi^{v}-r\pi \right |
&=&\left | \pi^{u}\left (k\pi^{v-u}+m\right )-r\pi \right |\\
&\geq&\left | \pi^{u}\left |k\pi^{v-u}-m\right | -r\pi \right | \nonumber\\
&\geq&\left | \pi^{u}\left |q_n\pi^{v-u}-p_n\right |-r\pi \right |\nonumber,
\end{eqnarray} 
where $r\in \Z$, and $\pi^{u}\geq 1$. Rearranging it, and applying the reverse triangle inequality $|X-Y|\geq ||X|-|Y||$,  yield
\begin{eqnarray}\label{eq4422.882}
\left | \pi^{u}\left | \pi^{v-u}-\frac{p_n}{q_n}\right |-\left | \frac{r\pi}{q_n} \right |\right |
&\geq&\left | \left |\frac{\pi^{u}}{2q_{n+1}}\right |-\left | \frac{r\pi}{q_n} \right |\right |\\
&\geq&\frac{1}{2q_{n+1}}\nonumber\\
&>&0\nonumber.
\end{eqnarray}
Therefore, relation $k\pi^{u}+m\pi^{v}= r\pi$ is false for any nontrivial integer point $(k,m)=(k\ne0,m\ne0)$ and $r \in \Z$. (ii) The proof for this case is similar.
\end{proof}

\section{Nonvanishing Sine Function Values}\label{s5589}\hypertarget{s5589}
The nonvanishing of the sine function at certain real numbers are required in the proofs of certain results. These are verified using either the irrationality of the real number $\pi$ or via the infinite product $\sin \left (x\right ) =x^{-1}\prod_{n \geq1} \left (1- (x\pi^{-1}n^{-1})^2 \right ) $ for any real number $x\in \R$ or the reflection formula $\Gamma \left (1-z \right )\Gamma \left (z \right )=\pi/\sin \pi z$ of the gamma function $\Gamma(z)$ for $z\in \C$.

\begin{lem}\label{lem5589.10} For any pair of integers $r\ne 1$, and $m \ne0$, the sine function satisfies the followings.
\begin{multicols}{2}
 \begin{enumerate} \label{eq5589.11}
\item $ \displaystyle \sin\left ( m\right )\ne0. $
\item $\displaystyle \sin\left ( \pi^rm\right )\ne0.$
\end{enumerate}
\end{multicols} 
\end{lem}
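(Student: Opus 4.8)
The plan is to reduce both claims to the location of the zeros of the sine function, namely that $\sin\theta=0$ precisely when $\theta\in\pi\Z$, and then to contradict the arithmetic nature of $\pi$. Equivalently, one may run the argument through the infinite product $\sin(x)=x\prod_{n\geq1}\left(1-x^2/(\pi^2n^2)\right)$ exactly as in the proof of Lemma \ref{lem4422.10}: a value $\sin(x)=0$ with $x\ne0$ forces some local factor $1-x^2/(\pi^2n^2)$ to vanish, i.e. $\pi=\pm x/n$. Either formulation turns the nonvanishing into a rationality statement about a power of $\pi$.

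For part (i) I would assume, toward a contradiction, that $\sin(m)=0$ for some nonzero integer $m$. Then $m=k\pi$ for some $k\in\Z$; the value $k=0$ is impossible since $m\ne0$, and any $k\ne0$ gives $\pi=m/k\in\Q$, contradicting the irrationality of $\pi$. Hence $\sin(m)\ne0$, and this step needs nothing beyond the irrationality of $\pi$.

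For part (ii) I would assume $\sin(\pi^r m)=0$ with $r\ne1$ and $m\ne0$, so that $\pi^r m=k\pi$ for some $k\in\Z$. Dividing by $\pi$ yields $\pi^{r-1}m=k$, whence $\pi^{r-1}=k/m\in\Q$, where $k\ne0$ because $\pi^{r-1}\ne0$ and $m\ne0$. Since $r\ne1$, the exponent $r-1$ is a nonzero integer, and the whole argument comes down to ruling out this rationality. This is the main obstacle, and here the irrationality of $\pi$ alone is insufficient: one must exclude rationality of \emph{every} nonzero integer power of $\pi$. I would therefore invoke the transcendence of $\pi$ (Lindemann's theorem). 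Indeed, if $\pi^{r-1}=k/m$ held, then $\pi$ would satisfy a nonzero polynomial equation with rational coefficients --- $mx^{r-1}-k=0$ when $r-1>0$, and $kx^{1-r}-m=0$ after clearing the negative power when $r-1<0$ --- making $\pi$ algebraic, a contradiction. Finally, it is worth noting that the hypothesis $r\ne1$ is genuinely necessary: $\sin(\pi m)=0$ for every integer $m$, and correspondingly the reduction degenerates exactly at $r-1=0$, where $\pi^{0}=1$ is rational.
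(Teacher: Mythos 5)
Your proof is correct, and for part (i) it is essentially the paper's argument in different clothing: the paper routes the statement through the reflection formula $\Gamma(1-z)\Gamma(z)=\pi/\sin \pi z$, whose poles encode exactly the fact you use directly, namely that $\sin\theta=0$ if and only if $\theta\in\pi\Z$; either way part (i) comes down to $m/\pi\notin\Z$, i.e.\ the irrationality of $\pi$. Where you genuinely add something is part (ii). The paper disposes of it with the single sentence ``Similar to the previous case,'' but the analogous reduction there requires $\pi^{r-1}m\notin\Z$, hence the irrationality of $\pi^{r-1}$ for every nonzero integer exponent $r-1$, and as you rightly point out this does \emph{not} follow from the irrationality of $\pi$ alone once $|r-1|\geq 2$ (an irrational number can have a rational power). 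Your appeal to the transcendence of $\pi$ --- observing that $\pi^{r-1}=k/m$ would exhibit $\pi$ as a root of the nonzero integer polynomial $mx^{r-1}-k$, or $kx^{1-r}-m$ after clearing a negative exponent --- is exactly the ingredient needed to close that gap, and your remark that the hypothesis $r\ne 1$ is sharp (since $\sin(\pi m)=0$) correctly locates where the reduction degenerates. In short: same skeleton as the paper, but your version makes explicit, and properly justifies, the one step the paper leaves unexamined.
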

\begin{proof} (i) The verification, using the reflection formula of the gamma function, yields
\begin{eqnarray}\label{eq5589.11c}
\sin (m)&=&\sin \left (\pi \cdot m/\pi\right )\\
&=&\frac{\pi}{\Gamma \left (1-m/\pi \right )\Gamma \left (m/\pi \right )}\nonumber\\
&\ne&0 \nonumber
\end{eqnarray}
for any integer $m\geq1$ since the gamma function $\Gamma(z)$ has its poles at the negative integers $z=n\leq0$, and
\begin{equation} \label{eq5589.13}
1-\frac{m}{\pi} \qquad \text{ and } \qquad \frac{m}{\pi}
\end{equation}
are irrational numbers, not negative integers. (ii) Similar to the previous case. 
\end{proof}

\begin{lem}\label{lem5589.70} If $k$ and $m $ are nonzero integers, then 
\begin{equation}\label{eq5589.70}
\sin(ke+m)\ne 0.
\end{equation} 
\end{lem}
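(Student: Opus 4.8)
The plan is to reduce the nonvanishing of $\sin(ke+m)$ directly to the lattice separation already proved in Lemma \ref{lem4422.700}. The only analytic input I need is the standard fact that the sine function vanishes at a real argument $x$ precisely when $x$ is an integer multiple of $\pi$, that is, $\sin x = 0$ if and only if $x \in \pi\Z$. This is also visible from the infinite product quoted at the start of this section, whose leading factor vanishes only at $x=0$ and whose general factor $1-(x/(\pi n))^2$ vanishes only at $x=\pm n\pi$ with $n\geq1$.

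Writing $x = ke+m$, a zero of $\sin(ke+m)$ would force $ke+m = r\pi$ for some $r \in \Z$, and I would dispose of the two cases as follows. If $r = 0$, then $ke+m = 0$ with $k \ne 0$ gives $e = -m/k \in \Q$, contradicting the irrationality of $e$. If $r \ne 0$, then $ke+m = r\pi$ is exactly the equality shown to be impossible in part (ii) of Lemma \ref{lem4422.700}, which holds for all nonzero integers $k,m$ and every integer $r$. Since neither case can occur, $ke+m$ is never an integer multiple of $\pi$, and therefore $\sin(ke+m) \ne 0$.

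I do not anticipate a genuine obstacle, because Lemma \ref{lem4422.700} already carries the full Diophantine weight and this statement is essentially an immediate corollary of it. The only point requiring a little care is the bookkeeping at the boundary: checking that the degenerate value $r = 0$ is truly excluded (handled above by the irrationality of $e$) and confirming that the separation estimate in Lemma \ref{lem4422.700} quantifies over both signs of $r$ and over all sign patterns of the pair $(k,m)$, so that no multiple of $\pi$ slips through unaccounted for.
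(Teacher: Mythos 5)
Your proposal is correct and follows essentially the same route as the paper: the paper's proof also reduces the nonvanishing of $\sin(ke+m)$ to the impossibility of $ke+m=r\pi$ established in Lemma \ref{lem4422.700} (its Case 3, the only case consistent with the hypothesis that $k$ and $m$ are both nonzero; the paper's Cases 1 and 2 are vacuous there). Your separate treatment of $r=0$ via the irrationality of $e$ is a harmless extra precaution that the paper folds into the statement of Lemma \ref{lem4422.700}.
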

\begin{proof} The task to prove that the set of nontrivial integer solutions $(k,m)\ne(0,0)$ of the equation
\begin{equation}\label{eq5589.72}
\sin(ke+m)= 0
\end{equation} 
is empty splits into three different cases.\\

\textbf{Case 1.} $k= 0$, and $m\ne0$. The relation 
\begin{eqnarray}\label{eq5589.78}
 \sin \left ( ke+m\right ) &=& \sin \left (m\right ) \\
&\ne&0\nonumber
\end{eqnarray} 
is true, see Lemma \ref{lem5589.10}.\\

\textbf{Case 2.} $k\ne 0$, and $m=0$. By Lemma \ref{lem4422.10}, $e\ne a\pi$ for any integer $a \geq 1$. Thus, the multiple $ke\ne ak \pi=n\pi$. This implies that the equation  
\begin{equation}\label{eq5589.74}
\sin(ke+m)=\sin(ke)=\sin(n\pi)=0
\end{equation} 
is impossible.

\textbf{Case 3.} $k\ne 0$, and $m\ne0$. By Lemma \ref{lem4422.700}, $ke+m\ne r\pi$ for any integers $k$, $m$, and $r \in \Z$.  This implies that the equation
\begin{equation}\label{eq5589.74}
\sin(ke+m)=\sin\left (r\pi\right )
\end{equation} 
is impossible. 
\end{proof}

\begin{lem}\label{lem5589.170} If $k$ and $m $ are nonzero integers, then 
\begin{equation}\label{eq5589.170}
\sin(ke\pi+m)\ne 0.
\end{equation} 
\end{lem}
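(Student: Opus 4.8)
The plan is to reduce the claim to a lattice separation inequality of the kind proved in Lemma \ref{lem4422.800}, in the spirit of the way Lemma \ref{lem5589.70} was deduced. Since the real zeros of the sine function are precisely the integer multiples of $\pi$, the equation $\sin(ke\pi+m)=0$ holds if and only if $ke\pi+m=r\pi$ for some $r\in\Z$. Dividing by $\pi$, this is equivalent to the assertion that
\begin{equation}
ke+m\pi^{-1}=r
\end{equation}
is an integer, or, after clearing $\pi$ and regrouping, to the existence of a nontrivial integer relation $k(e\pi)-r\pi+m=0$ among the three numbers $1$, $\pi$, and $e\pi$. Thus it suffices to prove that $ke\pi+m\ne r\pi$ for all $r\in\Z$ whenever $k$ and $m$ are nonzero.

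To establish this separation I would follow the template of Lemma \ref{lem4422.800}. Restricting as usual to the first quadrant $k\ge 1$, $m\ge 1$, I would write the putative identity as $e=\frac{r}{k}-\frac{m}{k\pi}$, so that $|ke-r|=|m|/\pi$, and then bring in the convergents $\{p_n/q_n:n\ge 1\}$ of the irrational number $e$. The inequalities $\frac{1}{2q_{n+1}}\le|q_ne-p_n|\le\frac{1}{q_n}$, together with the best approximation bound $|q_ne-p_n|\le|ke-r|$ valid for $k\le q_n$, from Lemma \ref{lem2000.05} and Lemma \ref{lem2000.07}, would then be fed into a lower estimate for $|ke\pi+m-r\pi|$, exactly as in the chain \eqref{eq4422.882}, with the aim of showing that this quantity is strictly positive for every $r$.

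The main obstacle is that, unlike every configuration handled in Lemma \ref{lem4422.800}, the number effectively being approximated here is the product $e\pi$, for which no continued fraction expansion is available. Concretely, the best approximation bound only yields $\frac{1}{2q_{n+1}}\le|ke-r|=|m|/\pi$, which is consistent rather than contradictory, so the convergents of $e$ by themselves do not force the separation $|ke\pi+m-r\pi|$ to be nonzero. The genuinely delicate case is $r=0$, which asks precisely whether $e\pi=-m/k$ can be rational; securing a strictly positive separation there, and hence the nonvanishing of $\sin(ke\pi+m)$, is the crux of the argument, and I expect it to require an input strictly stronger than the elementary lattice estimates sufficient for Lemma \ref{lem4422.800}.
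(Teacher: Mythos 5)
Your proposal does not close the proof, and you say so yourself; the gap is real and it is exactly where you locate it. The reduction is correct: $\sin(ke\pi+m)=0$ if and only if $ke\pi+m=r\pi$ for some $r\in\Z$, so the lemma is equivalent to a non-degeneracy statement about the three numbers $1$, $\pi$, $e\pi$. But the machinery of Lemma \ref{lem4422.800} genuinely cannot reach this. The convergents of $e$ control $|ke-p|$ for \emph{integer} $p$, whereas here the putative relation $ke\pi+m=r\pi$ gives $|ke-r|=|m|/\pi$, and the lower bound $|q_ne-p_n|\geq 1/(2q_{n+1})$ is perfectly compatible with that equality --- there is no contradiction to extract. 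Worse, the case $r=0$ is the assertion $e\pi=-m/k$, i.e.\ the rationality of $e\pi$, which is a famous open problem (only the weaker statement that at least one of $e+\pi$, $e\pi$ is irrational is known). So no elementary continued-fraction argument of the type used elsewhere in this paper can prove the stated lemma, and your proposal, as you acknowledge, stops short of a proof.

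You should also be aware that the paper's own proof has the same hole, concealed rather than acknowledged. Its Case 3 invokes Lemma \ref{lem4422.700} to claim $ke\pi+m\ne(2r+1)\pi$; but Lemma \ref{lem4422.700} is a statement about $ke+m$, not about $ke\pi+m$, so the citation does not apply, and even if it did, excluding only the \emph{odd} multiples $(2r+1)\pi$ leaves untouched the even multiples and in particular $r=0$, which is precisely the $e\pi\in\Q$ question. Your diagnosis --- that the number effectively being approximated is $e\pi$, for which no usable expansion is available, and that an input strictly stronger than the elementary lattice estimates is required --- is accurate, and it identifies why Lemma \ref{lem5589.170}, and hence Theorem \ref{thm8575.26} which depends on it, is not actually established by the arguments in this paper.
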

\begin{proof} The task to prove that the set of nontrivial integer solutions $(k,m)\ne(0,0)$ of the equation
\begin{equation}\label{eq5589.172}
\sin(ke\pi+m)= 0
\end{equation} 
is empty splits into three different cases.\\

\textbf{Case 1.} $k= 0$, and $m\ne0$. The relation 
\begin{eqnarray}\label{eq5589.178}
 \sin \left ( ke\pi+m\right ) &=& \sin \left (m\right ) \\
&\ne&0\nonumber
\end{eqnarray} 
is true, see Lemma \ref{lem5589.10}.\\
     
\textbf{Case 2.} $k\ne 0$, and $m=0$. Since $e$ is irrational, the relation $ke\pi = n\pi$, where $n\ne 0$, is impossible. This implies that the equation  
\begin{equation}\label{eq5589.174}
\sin(ke\pi+m)=\sin(ke\pi)=\sin(n\pi)=0
\end{equation} 
is impossible.\\

\textbf{Case 3.} $k\ne 0$, and $m\ne0$. By Lemma \ref{lem4422.700}, $ke\pi+m\ne (2r+1)\pi$ for any integers $k$, $m$, and $r \in \Z$.  This implies that the equation
\begin{equation}\label{eq5589.74i}
\sin(ke\pi+m)=\sin\left ((2r+1)\pi\right )
\end{equation} 
is impossible. 
\end{proof}

\begin{lem}\label{lem5589.370} Let $1\leq r<s$ be a pair of integers, and let $k$ and $m $ be nonzero integers, then 
\begin{equation}\label{eq5589.370}
\sin\left (k\pi^{r+1}+m\pi^{s+1}\right )\ne 0.
\end{equation} 
\end{lem}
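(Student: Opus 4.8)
The plan is to show directly that the argument $k\pi^{r+1}+m\pi^{s+1}$ never lands on a zero of the sine, that is, that it is never an integer multiple of $\pi$. Since $\sin(X)=0$ precisely when $X=n\pi$ for some $n\in\Z$, it suffices to prove $k\pi^{r+1}+m\pi^{s+1}\ne n\pi$ for every integer $n$, and this splits cleanly into the cases $n\ne 0$ and $n=0$.

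For $n\ne 0$ I would invoke Lemma \ref{lem4422.800}(i) with the substitution $u=r+1$ and $v=s+1$. The hypothesis $1\le r<s$ yields $1\le r+1<s+1$, so the index condition $1\le u<v$ of that lemma is met and it applies verbatim, giving $k\pi^{r+1}+m\pi^{s+1}\ne n\pi$ for all nonzero integers $n$. In effect, the entire analytic difficulty, namely the lattice-point approximation built from the continued-fraction convergents of $\pi^{(s+1)-(r+1)}=\pi^{s-r}$, has already been carried out in Lemma \ref{lem4422.800}, and I simply reuse it.

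The case $n=0$ is handled separately and is immediate: an equality $k\pi^{r+1}+m\pi^{s+1}=0$ would rearrange, after dividing by $m\pi^{r+1}\ne 0$, to $\pi^{s-r}=-k/m$, which is impossible because the left-hand side is irrational, being a positive integer power of the transcendental number $\pi$, while the right-hand side is rational. Combining the two cases shows that $k\pi^{r+1}+m\pi^{s+1}$ is never of the form $n\pi$, whence $\sin\left(k\pi^{r+1}+m\pi^{s+1}\right)\ne 0$.

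I expect no real obstacle here. Unlike Lemmas \ref{lem5589.70} and \ref{lem5589.170}, the hypothesis already forces $k$ and $m$ to be nonzero, so the three-way case split of those proofs collapses and the only substantive input is the approximation inequality of Lemma \ref{lem4422.800}. The single point requiring a moment's care is matching the exponents correctly, so that the shift from $(r,s)$ to $(r+1,s+1)$ still satisfies the index hypothesis $1\le u<v$, which it does.
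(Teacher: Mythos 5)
Your proof is correct, and for the substantive case it leans on the same workhorse the paper intends, namely Lemma \ref{lem4422.800}(i) with $u=r+1$, $v=s+1$; the index check $1\le r+1<s+1$ is exactly the point needing verification and you verify it. The organization differs in two ways worth recording. First, the paper splits on whether $k$ or $m$ vanishes and spends two of its three cases on situations excluded by the hypothesis $k,m\ne 0$ (disposing of them via Lemma \ref{lem5589.10}), whereas you split on the integer $n$ in the putative relation $k\pi^{r+1}+m\pi^{s+1}=n\pi$, which is the natural parameter here. Second, and more substantively, Lemma \ref{lem4422.800} as stated requires the multiplier of $\pi$ to be nonzero, so it does not exclude $k\pi^{r+1}+m\pi^{s+1}=0$; the paper's Case 3 asserts the conclusion ``for any $a\in\Z$'' without addressing $a=0$ (and cites Lemma \ref{lem4422.700}, about $ke+m$, where Lemma \ref{lem4422.800} is clearly intended). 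Your separate $n=0$ argument, reducing to the irrationality of $\pi^{s-r}$, closes that small gap. One caveat: that step genuinely needs more than the irrationality of $\pi$, since a power of an irrational number can be rational; your appeal to the transcendence of $\pi$ covers it, and is consistent with the paper, whose proof of Lemma \ref{lem4422.800} already takes the irrationality of $\pi^{v-u}$ for granted when forming its continued-fraction convergents.
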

\begin{proof} The task to prove that the set of nontrivial integer solutions $(k,m)\ne(0,0)$ of the equation
\begin{equation}\label{eq5589.372}
\sin\left (k\pi^{r+1}+m\pi^{s+1}\right )= 0
\end{equation} 
is empty splits into three different cases.\\

\textbf{Case 1.} $k= 0$, and $m\ne0$. Since $s\geq 2$ is an integer, the relation 
\begin{eqnarray}\label{eq5589.378}
 \sin \left (k\pi^{r+1}+m\pi^{s+1}\right ) &=& \sin \left (m\pi^{s+1}\right ) \\
&=&0\nonumber,
\end{eqnarray} 
where $\pi^{s+1}\geq \pi^3$, is false, see Lemma \ref{lem5589.10}.\\

\textbf{Case 2.} $k\ne 0$, and $m=0$. Since $r\geq 1$ is an integer, the relation 
\begin{equation}\label{eq5589.374j}
\sin\left (k\pi^{r+1}+m\pi^{s+1}\right )=\sin(k\pi^{r+1})=0
\end{equation} 
where $\pi^r\geq \pi^2$, is false, see Lemma \ref{lem5589.10}.\\     

\textbf{Case 3.} $k\ne 0$, and $m\ne0$. By Lemma \ref{lem4422.700}, $k\pi^{r+1}+m\pi^{s+1}\ne a\pi$ for any integers $k\ne0$, $m\ne0$, and $a \in \Z$. This implies that the equation
\begin{equation}\label{eq5589.374l}
\sin\left (k\pi^{r+1}+m\pi^{s+1}\right )=\sin\left (a\pi\right )
\end{equation} 
is impossible. 
\end{proof}

\section{Finite Sine Sums}\label{s5489}
\begin{lem}\label{lem5489.32} For any real number $t \ne k \pi$, $k \in \Z$, and a large integer $x \geq 1$, the finite sum
\begin{enumerate} [font=\normalfont, label=(\roman*)]
 \item $\displaystyle\sum_{-x\leq n \leq x}e^{i2 tn}=\frac{\sin((2x+1)t)}{\sin(t)}. $
\item  $\displaystyle\left |\sum_{-x\leq n \leq x}e^{i2 tn} \right | \leq\frac{1}{| \sin(t) |}. $
\end{enumerate}
\end{lem}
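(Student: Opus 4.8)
The plan is to recognize the sum in part (1) as a two-sided finite geometric series and to evaluate it in closed form, after which part (2) is an immediate consequence of the trivial bound $|\sin((2x+1)t)|\le 1$.

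First I would set $q=e^{i2t}$ and observe that the hypothesis $t\ne k\pi$ for $k\in\Z$ is exactly the condition $q\ne 1$, which is what permits summing the geometric series. Writing
\begin{equation}
\sum_{-x\leq n \leq x}e^{i2tn}=\sum_{n=-x}^{x}q^n=q^{-x}\sum_{j=0}^{2x}q^{j}=q^{-x}\,\frac{q^{2x+1}-1}{q-1}=\frac{q^{x+1}-q^{-x}}{q-1},
\end{equation}
I would then symmetrize this quotient by multiplying numerator and denominator by $e^{-it}=q^{-1/2}$, obtaining
\begin{equation}
\frac{q^{x+1}-q^{-x}}{q-1}=\frac{q^{x+1/2}-q^{-x-1/2}}{q^{1/2}-q^{-1/2}}=\frac{e^{i(2x+1)t}-e^{-i(2x+1)t}}{e^{it}-e^{-it}}.
\end{equation}

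Next I would apply Euler's formula in the form $e^{i\theta}-e^{-i\theta}=2i\sin\theta$ to both the numerator (with $\theta=(2x+1)t$) and the denominator (with $\theta=t$). The common factor $2i$ cancels, yielding
\begin{equation}
\sum_{-x\leq n \leq x}e^{i2tn}=\frac{2i\sin((2x+1)t)}{2i\sin(t)}=\frac{\sin((2x+1)t)}{\sin(t)},
\end{equation}
which is the assertion of part (1); note that $\sin(t)\ne 0$ precisely because $t\ne k\pi$, so the expression is well defined. Finally, for part (2) I would take absolute values in the identity just established and use $|\sin((2x+1)t)|\le 1$ to conclude
\begin{equation}
\left|\sum_{-x\leq n \leq x}e^{i2tn}\right|=\frac{|\sin((2x+1)t)|}{|\sin(t)|}\leq\frac{1}{|\sin(t)|}.
\end{equation}

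There is no genuine obstacle here: the computation is the standard Dirichlet-kernel identity, and the only point requiring care is bookkeeping the half-integer powers $q^{\pm 1/2}$ during the symmetrization step so that the exponents match $(2x+1)t$ in the numerator and $t$ in the denominator. The hypothesis $t\ne k\pi$ is used twice—once to justify dividing by $q-1$ and once to guarantee the denominator $\sin(t)$ is nonzero—so I would make that dependence explicit at the outset.
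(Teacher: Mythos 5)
Your proof is correct and is essentially the paper's argument: both reduce the two-sided sum to a geometric series and extract the Dirichlet-kernel closed form, the only cosmetic difference being that you re-index into a single geometric series $q^{-x}\sum_{j=0}^{2x}q^j$ while the paper splits the sum into two one-sided subsums. Your version actually supplies the symmetrization and Euler-formula details that the paper leaves as ``use the geometric series,'' so nothing further is needed.
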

\begin{proof} (i) Expand the complex exponential sum into two subsums:
\begin{equation}\label{eq5489.50}
\sum_{-x\leq n \leq x}e^{i2 tn}=e^{-i2  t}\sum_{0\leq n \leq x-1}e^{-i2  tn}+\sum_{0\leq n \leq x}e^{i2 tn}.
\end{equation}
Lastly, use the geometric series to determine the closed form.	
\end{proof}


\section{Linear Independence Of $1$, $e$, and $\pi$} \label{s7575}

 \begin{proof} (Theorem \ref{thm7575.26}:) On the contrary, the numbers $1$, $e$ and $\pi$ are linearly dependent over the rational numbers, and the equation 
 \begin{equation} \label{eq7575.55}
1\cdot A+e \cdot B+ \pi\cdot C=0 ,
 \end{equation}
where $A, B, C \in \Z^{\times}$ are integers, has a nontrivial rational solution $(A,B,C)\ne(0,0,0)$. Rewrite it in the equivalent form 
\begin{equation} \label{eq5772.56}
-2 \pi C=2\left ( eB+ A \right ) .
\end{equation} 
 Take the \textit{irrational limit test}, see Lemma \ref{lem3288.06}, in both sides to obtain
\begin{equation} \label{eq7575.12}
\mathcal{I}(-2 \pi C)= \mathcal{I}\left (2(eB +A) \right ).
\end{equation}
The left side and the right side are evaluated separately.\\

\textbf{Left Side.} The verification is based on the identity $e^{-i2 \pi C}=1$, where $C$ is an integer. The evaluation of the limit is
\begin{equation}\label{eq7575.14}
\mathcal{I}(-2 \pi C)=\lim_{x \to \infty}\frac{1}{2x} \sum_{-x\leq n \leq x}e^{-i2  \pi Cn}=\lim_{x \to \infty}\frac{1}{2x} \sum_{-x\leq n \leq x}1=1.
\end{equation}

\textbf{Right Side.} The verification is based on the nonvanishing of the sine function $\sin\left  (eB +A  \right) \ne0$, see Lemma \ref{lem5589.70}. An application of Lemma \ref{lem5489.32} yields  

\begin{eqnarray}\label{eq7575.16}
\mathcal{I}\left (2(eB +A ) \right )&= &\lim_{x \to \infty}\frac{1}{2x} \sum_{-x \leq n \leq x}e^{i\left (2(eB +A) \right ) n}\\
&\leq &\lim_{x \to \infty}\frac{1}{2x}\frac{1}{\left |\sin\left (eB +A \right ) \right |} \nonumber\\
&=&0 \nonumber.
\end{eqnarray}
Clearly, these distinct evaluations
\begin{equation}\label{eq7575.49}
1=\mathcal{I}(-2 \pi C)\ne \mathcal{I}\left (2(eB +A ) \right )=0
\end{equation}
contradict equation (\ref{eq7575.12}). This implies that equation (\ref{eq7575.55}) does not have a nontrivial rational solution $\left (A,B,C\right) \in \Z^{\times}\times \Z^{\times}\times\Z^{\times}$. Hence, the numbers $1$, $ e$ and $\pi$ are linearly independent over the rational numbers $\Q^{\times}$. 
\end{proof}

\section{The Real Number $e+\pi$} \label{s9570}\hypertarget{s9570}
The continued fraction of the number understudy is
\begin{equation}\label{eq9570.50}
e+\pi=[5;1,6,7,3,21,2,1,2,2, 1,1,2,3,3,2,5,2,1,1,1,3,1, 8,\ldots].
\end{equation}  
The previous result immediately implies that this continued fraction is infinite.

\begin{cor} \label{cor9570.25}  The real number $e+\pi=5.859874\ldots $ is irrational number.
\end{cor}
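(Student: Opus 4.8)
The plan is to obtain the irrationality of $e+\pi$ as an immediate corollary of the rational independence of the triple $\{1, e, \pi\}$ furnished by Theorem \ref{thm7575.26}. First I would argue by contradiction: assume $e+\pi \in \Q$. Because $e+\pi = 5.859874\ldots$ is strictly positive, this rational value is nonzero, so I may write $e+\pi = a/b$ for some integers $a, b \in \Z^{\times}$ with $b \geq 1$.

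Next I would clear the denominator to reach the relation $be + b\pi = a$, and rewrite it in the homogeneous shape
\[
(-a)\cdot 1 + b\cdot e + b\cdot \pi = 0 .
\]
The coefficient triple here is $(A,B,C) = (-a,\, b,\, b)$, and every entry is a nonzero integer: $b \neq 0$ holds by construction, while $a \neq 0$ follows from $e+\pi > 0$. This is precisely a nontrivial integer linear dependence among the numbers $1$, $e$, and $\pi$.

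Finally I would invoke Theorem \ref{thm7575.26}, which rules out any such relation: it asserts that $A + eB + \pi C = 0$ forces $(A,B,C) = (0,0,0)$. The dependence just produced therefore contradicts the theorem, so the assumption $e+\pi \in \Q$ must fail, and $e+\pi$ is irrational. I expect no genuine obstacle in this argument, since all of the analytic work is absorbed into Theorem \ref{thm7575.26}; the only point meriting a line of care is confirming that the coefficients on $e$ and $\pi$ are both nonzero, which is automatic because clearing the common denominator $b \geq 1$ leaves each of them equal to $b$.
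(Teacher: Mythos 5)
Your proposal is correct and follows exactly the route the paper takes: deduce the irrationality of $e+\pi$ as an immediate consequence of Theorem \ref{thm7575.26} by turning a hypothetical rational value $a/b$ into a nontrivial integer relation $(-a)\cdot 1 + b\cdot e + b\cdot \pi = 0$. The paper's own proof is a one-line citation of the theorem; your added check that all three coefficients $(-a,b,b)$ are nonzero is a sensible refinement, since the paper's theorem is phrased in terms of coefficients in $\Z^{\times}$.
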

 \begin{proof} By Theorem \ref{thm7575.26}, the equation 
 \begin{equation} \label{eq9570.55}
1\cdot A+e \cdot B+ \pi\cdot C=0 ,
 \end{equation}
has no nontrivial integer solutions $(A, B, C)\ne(0,0,0)$.
\end{proof}
\begin{conj} \label{cnj4035.04}  The real number $e+\pi$ is transcendental.
\end{conj}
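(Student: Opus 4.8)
The plan is to reduce the transcendence of $e+\pi$ to the algebraic independence of $e$ and $\pi$ over $\Q$, which is the natural strengthening of the linear independence established in Theorem \ref{thm7575.26}. Indeed, if $e+\pi$ were algebraic, say $e+\pi=a\in\overline{\Q}$, then $\pi=a-e$ exhibits the nontrivial polynomial relation $\pi+e-a=0$ between $\pi$ and $e$ with algebraic coefficients, so $\pi$ and $e$ would be algebraically dependent over $\overline{\Q}$, equivalently over $\Q$. Conversely, the algebraic independence of $e$ and $\pi$ forces $e+\pi$ to be transcendental. I emphasize at the outset that the irrational limit test of Section \ref{s3288} cannot reach this conclusion: the test $\mathcal{I}$ of Lemma \ref{lem3288.06} certifies only rationality (equivalently, $\Q$-linear relations) through the equidistribution of the relevant sequences, and is structurally blind to polynomial relations of degree $\geq 2$. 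A genuinely transcendence-theoretic input is therefore unavoidable.

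The one piece of unconditional progress I would record is elementary. If both $e+\pi$ and $e\pi$ were algebraic, then $e$ and $\pi$ would be the two roots of the monic quadratic $x^{2}-(e+\pi)x+e\pi$ with algebraic coefficients, forcing $e,\pi\in\overline{\Q}$ and contradicting the Lindemann--Weierstrass theorem (which already gives $e$ transcendental). Hence \emph{at least one} of $e+\pi$ and $e\pi$ is transcendental. This reduces the conjecture to separating the two quantities, that is, to excluding the remaining possibility that $e+\pi$ is algebraic while $e\pi$ is transcendental; no elementary device, and in particular nothing in the equidistribution framework of this note, distinguishes the sum from the product here.

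The conditional route I would take relies on Schanuel's conjecture. Applied to the $\Q$-linearly independent numbers $x_1=1$ and $x_2=i\pi$, Schanuel's conjecture implies that $\pi$ and $e$ are algebraically independent over $\Q$: one has $e^{x_1}=e$ and $e^{x_2}=-1$, so the conjecture forces $\mathrm{trdeg}_{\Q}\,\Q(\pi,e)=2$. Combined with the reduction of the first paragraph, this yields the transcendence of $e+\pi$ immediately. Thus the statement would follow at once from any theorem establishing the algebraic independence of $e$ and $\pi$.

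The main obstacle is exactly the unconditional step that the conditional argument finesses: the algebraic independence of $e$ and $\pi$ over $\Q$ is a long-standing open problem, and Schanuel's conjecture is itself unproven. None of the available machinery closes the gap. Lindemann--Weierstrass delivers the transcendence of $e$ and of $\pi$ separately, but no algebraic relation between them; Gelfond--Schneider and Baker's theory of linear forms in logarithms govern $e^{\pi}$ and related exponential quantities rather than $e+\pi$; and Nesterenko's theorem yields the algebraic independence of $\pi$ and $e^{\pi}$ (together with $\Gamma(1/4)$) but says nothing about $e$ versus $\pi$. Accordingly, I would present the statement as a conjecture supported by the reductions above, with the algebraic independence of $e$ and $\pi$ isolated as the precise missing ingredient, and I would not claim a proof within the methods of this paper.
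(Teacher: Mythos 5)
You were asked to prove a statement that the paper itself presents only as a conjecture, with no proof offered: the transcendence of $e+\pi$ is a famous open problem, and the paper's sole ``support'' for it is the numerical irrationality-measure data in Table \ref{t9570.04}. Your proposal is therefore correct precisely because you refuse to claim a proof, and everything you do assert is sound. The reduction is right: if $e+\pi=a\in\overline{\Q}$ then $e+\pi-a=0$ is a nontrivial algebraic relation, and algebraic dependence over $\overline{\Q}$ is equivalent to algebraic dependence over $\Q$ by additivity of transcendence degree in towers. The unconditional fragment is the classical one: if both $e+\pi$ and $e\pi$ were algebraic, then $e$ and $\pi$ would be roots of $x^{2}-(e+\pi)x+e\pi\in\overline{\Q}[x]$ and hence algebraic, contradicting the transcendence of $e$ (Hermite; Lindemann--Weierstrass is a strengthening), so at least one of the two is transcendental --- note this is strictly stronger than the paper's own observation in Section \ref{sect1010}, which only records that at least one of $e+\pi$, $e\pi$ is \emph{irrational}. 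Your Schanuel argument with $x_{1}=1$, $x_{2}=i\pi$ is the standard conditional proof and is carried out correctly ($i\pi$ and $e$ algebraically independent implies $\pi$ and $e$ algebraically independent since $i$ is algebraic). Finally, your diagnostic remark is apt and worth keeping: the irrational limit test $\mathcal{I}$ of Lemma \ref{lem3288.06} detects only membership in $\Q$, i.e.\ degree-one relations, so it cannot in principle address a degree-$\geq 2$ question like algebraicity of $e+\pi$; no argument within this paper's framework could settle the conjecture, and your presentation of the statement as open, with the algebraic independence of $e$ and $\pi$ isolated as the missing ingredient, is exactly the correct disposition.
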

\begin{conj} \label{conj9570.02}  The irrationality measure of the real number $e+\pi$ is $\mu(e+\pi)=2$.
\end{conj}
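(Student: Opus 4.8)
The quantity at stake is the Liouville--Roth irrationality measure
\begin{equation*}
\mu(\alpha)=\inf\left\{\nu\in\R:\left|\alpha-\frac{p}{q}\right|>\frac{1}{q^{\nu}}\text{ for all but finitely many }\frac{p}{q}\in\Q\right\},
\end{equation*}
and the plan is to establish the two matching inequalities $\mu(e+\pi)\ge 2$ and $\mu(e+\pi)\le 2$ separately. The lower bound is free: by Dirichlet's approximation theorem every irrational number admits infinitely many convergents $p_n/q_n$ with $|\alpha-p_n/q_n|<q_n^{-2}$, so $\mu(\alpha)\ge 2$ for every $\alpha\in\I$. Since $e+\pi$ is irrational by Corollary \ref{cor9570.25}, this already yields $\mu(e+\pi)\ge 2$, and the entire substance of the conjecture is concentrated in the upper bound $\mu(e+\pi)\le 2$.

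For the upper bound I would work directly with the continued fraction $e+\pi=[a_0;a_1,a_2,\ldots]$ displayed in (\ref{eq9570.50}), by way of the standard identity
\begin{equation*}
\mu(\alpha)=2+\limsup_{n\to\infty}\frac{\ln a_{n+1}}{\ln q_{n}},
\end{equation*}
where $q_n$ denotes the denominator of the $n$-th convergent. Thus $\mu(e+\pi)\le 2$ is equivalent to the growth condition $\ln a_{n+1}=o(\ln q_n)$ on the partial quotients. The visible initial block of (\ref{eq9570.50}) consists of small partial quotients, fully consistent with the Gauss--Kuzmin statistics of a Lebesgue-generic real number (for which $\mu=2$), and this is the heuristic basis of the conjecture. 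Concretely, the target is an effective lower bound of the shape $|q(e+\pi)-p|\ge c_{\varepsilon}\,q^{-1-\varepsilon}$ valid for every $\varepsilon>0$ and all large $q$.

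The only systematic route to such a bound is the theory of integer linear forms: one treats $q(e+\pi)-p=qe+q\pi-p$ as a linear form in $1,e,\pi$ and seeks an effective transcendence-type measure for it. For $e$ in isolation the machinery is complete --- the continued fraction of $e$ is explicit and gives $\mu(e)=2$ with fully effective constants --- but $\pi$ is the bottleneck, since the best known effective estimate is only of the form $\mu(\pi)\le 7.1\ldots$. Moreover the joint form cannot be reduced by splitting $p=a+b$ and adding separate estimates for $|qe-a|$ and $|q\pi-b|$, because the numerator $p$ realising the optimal approximation to $e+\pi$ need not decompose into good numerators for $e$ and for $\pi$ individually.

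The hard part is therefore structural and, at present, insurmountable by the techniques of this note. Establishing $\mu(e+\pi)=2$ would in particular require approximation control for $\pi$ sharper than the celebrated open problem $\mu(\pi)=2$, together with quantitative information on the interaction of $e$ and $\pi$, whose very algebraic independence is unknown. Finally, the analytic irrational-limit test of Section \ref{s3288}, on which the qualitative results of this paper rest, detects only the dichotomy $\alpha\in\Q$ versus $\alpha\notin\Q$ through the $0/1$ value of the limit in Lemma \ref{lem3288.06}; it carries no information about the \emph{rate} at which rationals approach $\alpha$, and so cannot be refined to produce an irrationality measure. A genuinely new effective ingredient --- either $\mu(\pi)=2$ itself or a direct measure for the form $qe+q\pi-p$ --- is required, and the conjecture should be regarded as a target lying well beyond the present method.
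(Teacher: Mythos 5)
The statement you were asked to prove is labelled a \emph{conjecture} in the paper, and the paper offers no proof of it: the author's entire support consists of the irrationality of $e+\pi$ (Corollary \ref{cor9570.25}), the displayed continued fraction (\ref{eq9570.50}), and the table of numerical approximations $\mu_0(e+\pi)$ clustering near $2$. Your assessment is therefore the correct one, and it matches the paper's implicit stance. What you actually establish --- the lower bound $\mu(e+\pi)\geq 2$ via Dirichlet (Lemma \ref{lem2000.02}) applied to the irrational number $e+\pi$ --- is sound and is the only provable half; your reduction of the upper bound to the growth condition $\ln a_{n+1}=o(\ln q_n)$ via the standard identity $\mu(\alpha)=2+\limsup_{n}\ln a_{n+1}/\ln q_n$ is also correct, and your observation that the irrational limit test of Lemma \ref{lem3288.06} is a pure $0/1$ dichotomy carrying no rate information is exactly right: nothing in Sections \ref{s3288}--\ref{s5489} can be quantified into a measure.

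One caution on your final paragraph: the claim that proving $\mu(e+\pi)=2$ ``would in particular require'' control sharper than $\mu(\pi)=2$ is a heuristic about available methods, not a logical implication --- $\mu(e+\pi)=2$ neither implies nor is implied by $\mu(\pi)=2$, since the partial quotients of a sum are not governed by those of the summands (indeed a generic perturbation of a badly approximable number is well approximable). It would be cleaner to say that every currently known route to an effective measure for the linear form $qe+q\pi-p$ passes through effective measures for $\pi$ (where only $\mu(\pi)\leq 7.10\ldots$ is known) together with some quantitative independence of $e$ and $\pi$, which is open. With that phrasing adjusted, your write-up is an accurate diagnosis: the statement is, and remains in the paper, an unproven conjecture supported only by numerics, and your proposal correctly proves the trivial direction while identifying precisely why the substantive direction is out of reach.
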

A few values were computed to illustrate the prediction in this conjecture, see Table \ref{t9570.04}. The fourth column displays the numerical approximation $\mu_0(e+\pi)$ of the actual value $\mu(e+\pi)$.

\begin{table}[h!]
\centering
\caption{Numerical Data For Irrationality Measure $|p_n/q_n-e-\pi|\geq q_n^{\mu(e+\pi)}$.} \label{t9570.04}
\begin{tabular}{l|l|l| l}
$n$&$p_n$&$q_n$&$\mu_0(e+\pi)$\\
\hline
1&$5$&   $1$   &$$\\
2&$6$&  $1$   &$$\\
3&$41$&   $7$   &$3.033470$\\
4&$93$&  $50?$   &$3.153443$\\
5&$920$&   $157$   &$2.608509$\\
6&$19613$&  $3347$   &$2.124717$\\
7&$40146$&  $6851$   &$2.382347$\\
8&$59759$&   $10198$   &$2.073126$\\
9&$379087$&  $64692$   &$2.067776$\\
10&$538751$&  $91939$   &$2.066541$\\ 
\end{tabular}
\end{table}

\section{Linear Independence Of $1$, $e$, And $\pi^{-1}$} \label{s8575}

\begin{proof} (Theorem \ref{thm8575.26}:) On the contrary, the numbers $1$, $e$ and $\pi^{-1}$ are linearly dependent over the rational numbers, and the equation 
\begin{equation} \label{eq8575.55}
1\cdot A+e \cdot B+ \pi^{-1}\cdot C=0 ,
\end{equation}
where $A, B, C \in \Z^{\times}$ are integers, has a nontrivial rational solution $(A,B,C)\ne(0,0,0)$. Rewrite it in the equivalent form 
\begin{equation} \label{eq8575.56}
-2 \pi A=2\left ( e\pi B+ C \right ) .
\end{equation} 
 Take the \textit{irrational limit test}, see Lemma \ref{lem3288.06}, in both sides to obtain
\begin{equation} \label{eq8575.12}
\mathcal{I}(-2 \pi A)= \mathcal{I}\left (2(e\pi B +C) \right ).
\end{equation}
The left side and the right side are evaluated separately.\\

\textbf{Left Side.} The verification is based on the identity $e^{-i2 \pi A}=1$, where $A$ is an integer. The evaluation of the limit is
\begin{equation}\label{eq8575.14}
\mathcal{I}(-2 \pi A)=\lim_{x \to \infty}\frac{1}{2x} \sum_{-x\leq n \leq x}e^{-i2  \pi An}=\lim_{x \to \infty}\frac{1}{2x} \sum_{-x\leq n \leq x}1=1.
\end{equation}
\textbf{Right Side.} The verification is based on the nonvanishing $\sin\left  (e\pi B +C  \right) \ne0$ of the sine function, see Lemma \ref{lem5589.170}. An application of Lemma \ref{lem5489.32} yields  
\begin{eqnarray}\label{eq8575.16}
\mathcal{I}\left (2(e\pi B +C ) \right )&= &\lim_{x \to \infty}\frac{1}{2x} \sum_{-x \leq n \leq x}e^{i\left (2(e\pi B +C) \right ) n}\\
&\leq &\lim_{x \to \infty}\frac{1}{2x}\frac{1}{\left |\sin\left (e\pi B +C \right ) \right |} \nonumber\\
&=&0 \nonumber.
\end{eqnarray}
Clearly, these distinct evaluations
\begin{equation}\label{eq8575.49}
1=\mathcal{I}(-2 \pi A)\ne \mathcal{I}\left (2(e\pi B +C ) \right )=0
\end{equation}
contradict equation \eqref{eq8575.12}. This implies that equation \eqref{eq8575.55} does not have a nontrivial rational solution $\left (A,B,C\right) \in \Z^{\times}\times \Z^{\times}\times\Z^{\times}$. Hence, the numbers $1$, $ e$ and $\pi^{-1}$ are linearly independent over the rational numbers $\Q^{\times}$. 
\end{proof}

\section{The Real Number $e\pi$} \label{s4035}\hypertarget{s4935}
The continued fraction of the number understudy is
\begin{equation}\label{eq4035.50}
e\pi=[8;1,1,5,1,3,1,4,12,3,2,1,5,2,12,1,1,1,10,2,2,2,3,8,3, 2,2,2,29,1,\ldots].
\end{equation}  
The previous result immediately implies that this continued fraction is infinite.

\begin{cor} \label{cor4035.23}  The real number $e\pi=8.539734\ldots $ is irrational.
\end{cor}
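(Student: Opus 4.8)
The plan is to deduce the irrationality of $e\pi$ directly from Theorem \ref{thm8575.26}, the linear independence of $1$, $e$, and $\pi^{-1}$, in exactly the same spirit as Corollary \ref{cor9570.25} was obtained from Theorem \ref{thm7575.26}. The governing idea is that the \emph{multiplicative} statement $e\pi\in\Q$ can be turned into an \emph{additive} linear relation among $1$, $e$, and $\pi^{-1}$ simply by dividing through by $\pi$; this is precisely why the companion theorem was stated for the triple containing $\pi^{-1}$ rather than $\pi$.

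First I would argue by contradiction. Suppose $e\pi=a/b$ with $a,b\in\Z$ and $b\neq 0$; since $e\pi>0$ we also have $a\neq 0$. Multiplying by $b$ and dividing by $\pi$ gives $be-a\pi^{-1}=0$, which I would record in the canonical form
\[
0\cdot 1+b\cdot e+(-a)\cdot \pi^{-1}=0 .
\]
Because the coefficient $b$ of $e$ is nonzero, this is a nontrivial integer linear relation among $1$, $e$, and $\pi^{-1}$.

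Next I would invoke Theorem \ref{thm8575.26}: the only rational solution $(A,B,C)$ of $A+Be+C\pi^{-1}=0$ is $(0,0,0)$. The displayed relation already supplies an integer solution $(A,B,C)=(0,b,-a)$ with $b\neq 0$, contradicting independence. The fact that one coefficient vanishes is harmless: the witness is still nontrivial, so it remains forbidden. Hence no integers $a,b$ with the assumed property exist, and $e\pi$ is irrational.

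I do not expect any genuine obstacle. The single point requiring care is the choice of auxiliary quantity: working with $\pi^{-1}$ linearizes the product $e\pi$, whereas $\pi$ would not, so the reduction would fail if one tried to route it through Theorem \ref{thm7575.26} instead. Once that observation is in hand, the argument is a one-line application of the already-established independence result.
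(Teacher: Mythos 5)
Your argument is correct and is exactly the reduction the paper intends: the paper's own proof is a one-line citation of Theorem \ref{thm8575.26}, leaving implicit the step you spell out, namely that $e\pi=a/b$ yields the nontrivial integer relation $0\cdot 1+b\cdot e+(-a)\cdot\pi^{-1}=0$. Your explicit remark that a vanishing coefficient still gives a nontrivial witness is a useful clarification, but the route is the same as the paper's.
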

 \begin{proof} By Theorem \ref{thm8575.26}, the equation 
 \begin{equation} \label{eq4035.55}
1\cdot A+e \cdot B+ \pi^{-1}\cdot C=0 ,
\end{equation}
has no nontrivial integer solutions $(A, B, C)\ne(0,0,0)$.
\end{proof}
\begin{conj} \label{cnj4035.04}  The real number $e\pi$ is transcendental.
\end{conj}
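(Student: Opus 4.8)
The plan is to reduce Conjecture \ref{cnj4035.04} to the nonexistence of polynomial relations and then to test whether the irrational limit test of Section \ref{s3288} can be pushed that far. By definition $e\pi$ is transcendental exactly when the powers $1, e\pi, (e\pi)^2, (e\pi)^3, \ldots$ are linearly independent over $\Q$, that is, when there is no nontrivial integer relation
\[ \sum_{j=0}^{d} c_j (e\pi)^j = 0, \qquad c_d \neq 0 . \]
Since $(e\pi)^j = e^j\pi^j$, such a relation reads $\sum_{j=0}^{d} c_j e^j \pi^j = 0$. Mirroring the proof of Theorem \ref{thm8575.26}, the first step would be to rearrange this into an equality whose one side is forced to be an integer multiple of $2\pi$ (so that $\mathcal{I}$ returns $1$), while the other side has a nonvanishing sine and hence $\mathcal{I}=0$ by Lemma \ref{lem5489.32}, yielding the contradiction $1 \neq 0$.

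A cheap unconditional observation, in the spirit of the trace-norm remark of Section \ref{sect1010}, is available first. The numbers $e$ and $\pi$ are the two roots of $x^2 - (e+\pi)x + e\pi$; if both $e+\pi$ and $e\pi$ were algebraic, this polynomial would have algebraic coefficients and therefore algebraic roots, contradicting the transcendence of $e$ and $\pi$. Hence at least one of $e+\pi$ and $e\pi$ is transcendental. This Vieta argument is unconditional, but it does not decide which of the two is transcendental, and that is precisely the content the conjecture demands.

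The main obstacle is that the irrational limit test is blind to the dichotomy the conjecture requires. For the triples of Theorems \ref{thm7575.26} through \ref{thm8008.26}, one term of the linear relation was a rational multiple of $\pi$, so after clearing denominators it became $2\pi m$ with $m \in \Z$ and $\mathcal{I}$ separated it cleanly from a transcendental remainder of controllable sine. In a genuine polynomial relation the mixed powers $e^j\pi^j$ cannot be regrouped into an integer multiple of $2\pi$ plus a remainder whose sine is bounded away from zero. More fundamentally, $\mathcal{I}$ only partitions $\R$ into rationals and irrationals, as Lemma \ref{lem3288.06} makes explicit, so it cannot distinguish an algebraic irrational from a transcendental one.

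Consequently I expect no proof along these analytic lines, and the honest diagnosis is that the cleanest sufficient route is to prove that $e$ and $\pi$ are algebraically independent over $\Q$. That would give the conjecture at once, since any relation $\sum_j c_j e^j \pi^j = 0$ is exactly an algebraic dependence of $e$ and $\pi$ (equivalently, $e\pi = \gamma$ algebraic forces $\pi = \gamma/e$). Algebraic independence of $e$ and $\pi$ is a well-known open problem, implied by Schanuel's conjecture but not established by any present method. This is why Conjecture \ref{cnj4035.04} is correctly posed as a conjecture rather than a theorem: isolating transcendence to the specific number $e\pi$ lies beyond the reach both of the irrational limit test and of the current transcendence toolkit.
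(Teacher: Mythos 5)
You were asked about a statement the paper itself only poses as a conjecture: no proof of the transcendence of $e\pi$ appears anywhere in the paper, and your conclusion that none can be extracted from its machinery is the correct outcome, not a failure. Your diagnosis is accurate on all counts. Lemma \ref{lem3288.06} makes $\mathcal{I}$ constant on the whole class of irrationals, so the irrational limit test is structurally incapable of separating algebraic irrationals from transcendental numbers; and a genuine polynomial relation $\sum_{j} c_j e^j \pi^j = 0$ offers no regrouping of the kind exploited in Theorems \ref{thm7575.26}--\ref{thm8008.26}, where one side was forced to be an integer multiple of $2\pi$ and the other had a sine bounded away from zero. Your unconditional Vieta observation is also sound and in fact sharpens the paper's Section \ref{sect1010} remark: the paper records only that at least one of the trace $e+\pi$ and the norm $e\pi$ is irrational, whereas your argument (if both were algebraic, then $e$ and $\pi$ would be roots of $x^2-(e+\pi)x+e\pi$, a quadratic with algebraic coefficients, hence algebraic) shows at least one of them is transcendental. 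Finally, your reduction of the conjecture to the algebraic independence of $e$ and $\pi$ --- a well-known open problem, implied by Schanuel's conjecture --- is the standard state of the art. In short, there is no gap in your work; the statement is correctly left as a conjecture, and your explanation of \emph{why} the paper's analytic test cannot reach it is a useful addition the paper itself does not make.
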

\begin{conj} \label{cnj4035.02}  The irrationality measure of the real number $e\pi$ is $\mu(e\pi)=2$.
\end{conj}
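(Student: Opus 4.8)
The irrationality measure $\mu(e\pi)$ is the infimum of the exponents $\mu$ for which the inequality
\[
\left|e\pi - \frac{p}{q}\right| > \frac{1}{q^{\mu}}
\]
holds for all but finitely many rationals $p/q$ with $q \geq 1$. The plan is to split the claim into the two inequalities $\mu(e\pi) \geq 2$ and $\mu(e\pi) \leq 2$. The lower bound needs none of the machinery developed above: Dirichlet's approximation theorem supplies infinitely many convergents $p_n/q_n$ with $|e\pi - p_n/q_n| < q_n^{-2}$, and since $e\pi$ is irrational by Corollary \ref{cor4035.23}, this already forces $\mu(e\pi) \geq 2$. All of the difficulty is therefore concentrated in the reverse inequality $\mu(e\pi) \leq 2$, which asserts that $e\pi$ is approximated no better than a generic real number.

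To attack $\mu(e\pi) \leq 2$, I would seek an \emph{effective} irrationality measure: explicit constants $c>0$ and an exponent $\mu_0$ arbitrarily close to $2$ with $|e\pi - p/q| \geq c\,q^{-\mu_0}$ for all sufficiently large $q$. The natural template is the classical proof that $\mu(e)=2$, which rests on Hermite--Pad\'e integral representations for the remainders in the rational approximation of the exponential; these produce denominators growing like $n!$ and numerators controlled by the same integrals, so that consecutive $q_n$ differ only polynomially and the measure is pinned at $2$. The analytic heart of such an argument is an auxiliary integral $I_n$ whose value is a small $\Z$-linear form $a_n\,e\pi - b_n$ with $|I_n|$ and $\max(|a_n|,|b_n|)$ simultaneously estimated.

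An equivalent, more elementary-looking reformulation works through the continued fraction $e\pi = [a_0;a_1,a_2,\ldots]$ with convergents $p_n/q_n$, using the exact identity
\[
\mu(e\pi) = 2 + \limsup_{n \to \infty} \frac{\log a_{n+1}}{\log q_n}.
\]
Thus $\mu(e\pi) = 2$ is equivalent to the partial quotients satisfying $a_{n+1} = q_n^{o(1)}$, i.e.\ exhibiting no super-polynomial spikes. The modest partial quotients displayed in the expansion \eqref{eq4035.50} are at least consistent with this behavior, and a proof in this direction would amount to ruling out arbitrarily large $a_{n+1}$ relative to $q_n$.

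The main obstacle, and the reason the statement is posed as a conjecture rather than proved, is that no construction of the required approximating forms is known for $e\pi$. The product inherits the transcendental complexity of $\pi$, for which even the far weaker equality $\mu(\pi)=2$ is open; the best unconditional bound is $\mu(\pi) \leq 7.10\ldots$ due to Salikhov. The simultaneous Pad\'e approximations that control $e$ so cleanly have no analogue that simultaneously controls $\pi$, and the \emph{irrational limit test} of Section \ref{s3288}, being a two-valued map onto $\F_2$, carries no quantitative information about approximation rates whatsoever. A genuine proof would therefore require either a sharp irrationality measure for $\pi$ fed into a product estimate, or an entirely new integral representation for $e\pi$; absent these, the honest assessment is that the claim lies beyond current technology, and the methods of this paper establish only the qualitative irrationality already recorded in Corollary \ref{cor4035.23}.
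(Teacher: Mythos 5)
The statement you were asked about is a conjecture: the paper offers no proof at all, only the numerical data in Table \ref{t4035.06} (convergents $p_n/q_n$ and empirical exponents $\mu_0(e\pi)$ hovering near $2$) as heuristic support. Your proposal is therefore correct in its overall verdict, and it actually delivers more than the paper does: the lower bound $\mu(e\pi)\geq 2$ via Dirichlet (Lemma \ref{lem2000.02}) is rigorous given irrationality, your continued-fraction criterion $\mu(e\pi)=2+\limsup_{n\to\infty}\log a_{n+1}/\log q_n$ is the standard correct reformulation, and your observation that the irrational limit test of Section \ref{s3288} is a map to $\F_2$ carrying no quantitative approximation information is exactly the right diagnosis of why the paper's machinery cannot touch the upper bound. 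Where the paper settles for tabulated evidence, you give the one provable half plus an accurate map of the obstruction; that is a genuinely better treatment of the statement as posed.

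Two caveats. First, your lower bound is conditional on Corollary \ref{cor4035.23}, and you should flag that dependence more skeptically: outside this paper, the irrationality of $e\pi$ is a famous open problem (equivalent to the rational linear independence of $1$, $e$, $\pi^{-1}$, i.e.\ Theorem \ref{thm8575.26}, whose limit-test argument does not withstand scrutiny), so even $\mu(e\pi)\geq 2$ is not unconditionally known --- though within the paper's own logical framework your use of the corollary is legitimate. Second, a minor attribution slip: Salikhov's bound is $\mu(\pi)\leq 7.6063\ldots$; the improvement to $\mu(\pi)\leq 7.103\ldots$ is due to Zeilberger and Zudilin. Neither point changes your conclusion that the conjecture lies beyond current methods.
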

A few values were computed to illustrate the prediction in this conjecture, see Table \ref{t4035.06}. The fourth column displays the numerical approximation $\mu_0(e\pi)$ of the actual value $\mu(e\pi)$.

\begin{table}[h!]
\centering
\caption{Numerical Data For Irrationality Measure $|p_n/q_n-e\pi|\geq q_n^{\mu(e\pi)}$.} \label{t4035.06}
\begin{tabular}{l|l|l| l}
$n$&$p_n$&$q_n$&$\mu_0(e\pi)$\\
\hline
1&$8$&   $1$   &$$\\
2&$9$&  $1$   &$$\\
3&$17$&   $2$   &$4.653474$\\
4&$94$&  $11$   &$3.153443$\\
5&$111$&   $13$   &$2.599126$\\
6&$427$&  $50$   &$2.104500$\\
7&$538$&  $63$   &$2.382347$\\
8&$2579$&   $302$   &$2.442400$\\
9&$31486$&  $3687$   &$2.150201$\\
10&$97037$&  $11363$   &$2.123550$\\ 
\end{tabular}
\end{table}

\section{Linear Independence Of $1$, $\pi^r$, and $\pi^{s}$} \label{s8008}

\begin{proof} (Theorem \ref{thm8008.26}:) On the contrary, the numbers $1$, $\pi^r$ and $\pi^{s}$ are linearly dependent over the rational numbers, and the equation 
\begin{equation} \label{eq8008.55}
1\cdot A+\pi^r \cdot B+ \pi^{s}\cdot C=0 ,
\end{equation}
where $A, B, C \in \Z^{\times}$ are integers, has a nontrivial rational solution $(A,B,C)\ne(0,0,0)$. Rewrite it in the equivalent form 
\begin{equation} \label{eq8008.56}
-2 \pi A=2\left ( \pi^{r+1} B+ \pi^{s+1}C \right ) .
\end{equation} 
 Take the \textit{irrational limit test}, see Lemma \ref{lem3288.06}, in both sides to obtain
\begin{equation} \label{eq8008.12}
\mathcal{I}(-2 \pi A)= \mathcal{I}\left (2(\pi^{r+1} B+ \pi^{s+1}C) \right ).
\end{equation}
The left side and the right side are evaluated separately.\\

\textbf{Left Side.} The verification is based on the identity $e^{-i2 \pi A}=1$, where $A$ is an integer. The evaluation of the limit is
\begin{equation}\label{eq8008.14}
\mathcal{I}(-2 \pi A)=\lim_{x \to \infty}\frac{1}{2x} \sum_{-x\leq n \leq x}e^{-i2  \pi An}=\lim_{x \to \infty}\frac{1}{2x} \sum_{-x\leq n \leq x}1=1.
\end{equation}
\textbf{Right Side.} The verification is based on the nonvanishing $\sin\left  (\pi^{r+1} B+ \pi^{s+1}C  \right) \ne0$ of the sine function, see Lemma \ref{lem5589.370}. An application of Lemma \ref{lem5489.32} yields  
\begin{eqnarray}\label{eq8008.16}
\mathcal{I}\left (2(\pi^{r+1}\pi B+ \pi^{s+1}C ) \right )&= &\lim_{x \to \infty}\frac{1}{2x} \sum_{-x \leq n \leq x}e^{i\left (2(\pi^{r+1} B+ \pi^{s+1}C) \right ) n}\\
&\leq &\lim_{x \to \infty}\frac{1}{2x}\frac{1}{\left |\sin\left (\pi^{r+1} B+ \pi^{s+1}C \right ) \right |} \nonumber\\
&=&0 \nonumber.
\end{eqnarray}
Clearly, these distinct evaluations
\begin{equation}\label{eq8008.49}
1=\mathcal{I}(-2 \pi A)\ne \mathcal{I}\left (2(\pi^{r+1} B+ \pi^{s+1}C) \right )=0
\end{equation}
contradict equation (\ref{eq8008.12}). This implies that equation (\ref{eq8008.55}) does not have a nontrivial rational solution $\left (A,B,C\right) \in \Z^{\times}\times \Z^{\times}\times\Z^{\times}$. Hence, the numbers $1$, $\pi^r$ and $\pi^{s}$ are linearly independent over the rational numbers $\Q^{\times}$. 
\end{proof}

\section{The Real Number $\pi+\pi^2$} \label{s4055}\hypertarget{s4055}
The continued fraction of the number understudy is
\begin{equation}\label{eq4055.50}
\pi+\pi^2=[13,89,3,4,3,1,2,3,1,9,2,1,1,3,1,12,1,1,4,2748,6,91,18,19,2,12,1,\ldots].
\end{equation}  
The previous result immediately implies that this continued fraction is infinite.

\begin{cor} \label{cor4055.23}  The real number $\pi+\pi^2=13.011197\ldots $ is irrational.
\end{cor}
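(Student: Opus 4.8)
The plan is to deduce the irrationality of $\pi + \pi^2$ directly from Theorem \ref{thm8008.26}, exactly as Corollary \ref{cor9570.25} and Corollary \ref{cor4035.23} were obtained from their respective linear independence theorems. The key observation is that $\pi + \pi^2 = \pi^1 + \pi^2$, so I would specialize the exponents in Theorem \ref{thm8008.26} to $r = 1$ and $s = 2$. With this choice the theorem asserts that the triple $1$, $\pi$, $\pi^2$ is linearly independent over $\Q$; equivalently, the equation $A + B\pi + C\pi^2 = 0$ admits no nontrivial integer solution $(A,B,C) \neq (0,0,0)$.

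I would then argue by contradiction. Suppose $\pi + \pi^2$ were rational, so that $\pi + \pi^2 = p/q$ for some integers $p$ and $q \geq 1$. Clearing the denominator yields $q\pi + q\pi^2 - p = 0$, which I would rewrite as the linear relation
\begin{equation}
(-p)\cdot 1 + q\cdot \pi + q\cdot \pi^2 = 0.
\end{equation}
Since $\pi + \pi^2 > 0$ forces $p > 0$, and since $q \geq 1$, all three coefficients $-p$, $q$, $q$ are nonzero. This is therefore a genuine nontrivial dependence among $1$, $\pi$, $\pi^2$, which contradicts Theorem \ref{thm8008.26} with $r = 1$ and $s = 2$.

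There is no substantive obstacle in this argument: the entire content is carried by the linear independence theorem, and the corollary amounts to the observation that any rationality relation $\pi + \pi^2 = p/q$ is one particular instance of the dependence relations that Theorem \ref{thm8008.26} rules out. The only point worth verifying is that the coefficients $-p$, $q$, $q$ are all nonzero, so that the $\Z^{\times}$ formulation employed in the proof of the theorem applies verbatim; this is immediate from the positivity of $\pi + \pi^2$. Hence $\pi + \pi^2$ is irrational.
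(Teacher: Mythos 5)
Your proof is correct and follows essentially the same route as the paper: specialize Theorem \ref{thm8008.26} to $r=1$, $s=2$, and observe that a rationality relation $\pi+\pi^2=p/q$ would yield a nontrivial integer dependence among $1$, $\pi$, $\pi^2$. You are in fact slightly more careful than the paper, since you verify that the coefficients $-p$, $q$, $q$ are all nonzero, which is needed for the $\Z^{\times}$ formulation used in the theorem's proof.
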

 \begin{proof} Set $r=1$ and $s=2$. By Theorem \ref{thm8008.26}, the equation 
 \begin{equation} \label{eq4055.55}
1\cdot A+\pi \cdot B+ \pi^2\cdot C=0 ,
\end{equation}
has no nontrivial integer solutions $(A, B, C)\ne(0,0,0)$.
\end{proof}

\begin{conj} \label{cnj4055.04}  The real number $\pi+\pi^2$ is transcendental.
\end{conj}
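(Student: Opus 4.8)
The plan is to deduce the transcendence of $\pi+\pi^2$ from the transcendence of $\pi$ itself, rather than from any linear independence statement. I would first stress what the main obstacle really is: it is the temptation to prove the claim with the apparatus of this paper. Theorem \ref{thm8008.26} only asserts that $1$, $\pi^r$, $\pi^s$ admit no nontrivial \emph{rational} linear relation, which forbids an algebraic dependence of one fixed degree, whereas transcendence of $\pi+\pi^2$ forbids a polynomial relation of \emph{every} degree. The \textit{irrational limit test} machinery is therefore structurally too weak, and recognizing that one must step outside it and import a genuinely new ingredient — Lindemann's theorem that $\pi$ is transcendental — is the only conceptual hurdle.

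Once that ingredient is granted, the argument I would run is a one-line contradiction. Suppose $\pi+\pi^2=\alpha$ were algebraic. Then $\pi$ is a root of the quadratic $x^2+x-\alpha$, whose coefficients $1,\,1,\,-\alpha$ all lie in $\overline{\Q}$. Since $\overline{\Q}$ is algebraically closed — equivalently, every root of a polynomial with algebraic coefficients is again algebraic — it would follow that $\pi\in\overline{\Q}$, contradicting Lindemann's theorem. Hence $\alpha=\pi+\pi^2$ is transcendental.

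Beyond that single hurdle I do not expect any difficulty: the algebra is entirely routine, and the same reasoning applies verbatim to $P(\pi)$ for every nonconstant $P\in\overline{\Q}[x]$. This shows that the statement is in fact provable and ought to be recorded as a theorem rather than a conjecture. By contrast, the transcendence of $e+\pi$ and of $e\pi$ is genuinely open, since there is no analogous reduction of either number to a single already-known transcendental; one only knows that at least one of the two must be transcendental, because $e$ and $\pi$ are the roots of $x^2-(e+\pi)x+e\pi$.
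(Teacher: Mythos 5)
The paper offers no proof of this statement: it is recorded only as a conjecture, alongside the analogous conjectures for $e+\pi$ and $e\pi$, so there is nothing to compare your argument against except the surrounding machinery, which, as you correctly diagnose, cannot reach transcendence (Theorem \ref{thm8008.26} with $r=1$, $s=2$ only rules out $\pi+\pi^2$ satisfying a rational linear relation, i.e.\ being rational, not an algebraic relation of arbitrary degree). Your proof is correct and complete: if $\pi+\pi^2=\alpha$ were algebraic, then $\pi$ would be a root of $x^2+x-\alpha\in\overline{\Q}[x]$, and since the field of algebraic numbers is algebraically closed, $\pi$ would itself be algebraic, contradicting Lindemann's 1882 theorem. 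The single external ingredient is therefore a classical, fully established result, and the statement is not in fact open; your remark that the identical argument handles $P(\pi)$ for every nonconstant $P\in\overline{\Q}[x]$ is also correct, as is your contrast with $e+\pi$ and $e\pi$, for which no reduction to a single known transcendental is available and one only knows, via the polynomial $x^2-(e+\pi)x+e\pi$, that at least one of the two is transcendental. What your approach buys is a complete resolution of the conjecture at the cost of importing a deep transcendence theorem foreign to the paper's elementary exponential-sum framework; what the paper's framework buys (unconditional elementarity) is exactly what makes it unable to prove this statement, so the trade you make is the right one.
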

\begin{conj} \label{cnj4055.02}  The irrationality measure of the real number $\pi+\pi^2$ is $\mu(\pi+\pi^2)=2$.
\end{conj}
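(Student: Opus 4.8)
The assertion is the two-sided estimate $\mu(\pi+\pi^2)=2$, and I stress at the outset that only one inequality is elementary. The lower bound $\mu(\pi+\pi^2)\geq 2$ is immediate: by Dirichlet's theorem every irrational real has irrationality measure at least $2$, and $\pi+\pi^2$ is irrational by Corollary \ref{cor4055.23}. All the difficulty lies in the upper bound $\mu(\pi+\pi^2)\leq 2$, that is, in proving that for each $\varepsilon>0$ the inequality
\begin{equation}\label{eq:propmeas}
\left|\pi+\pi^2-\frac{p}{q}\right|<\frac{1}{q^{2+\varepsilon}}
\end{equation}
has only finitely many rational solutions $p/q$.

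The plan is to push a hypothetical family of solutions of \eqref{eq:propmeas} back onto $\pi$ through the defining relation $\pi^2+\pi=\pi+\pi^2$. Writing $\beta=\pi+\pi^2$ and clearing denominators, a solution of \eqref{eq:propmeas} is exactly an integer polynomial $P(x)=qx^2+qx-p$ with $|P(\pi)|=|q\beta-p|<q^{-1-\varepsilon}$ and height $H(P)=\max(q,|p|)\asymp q$ (since $|p|\asymp|\beta|\,q$). A Mahler-type transcendence measure for $\pi$, of the shape $|P(\pi)|\geq c\,H(P)^{-w_2(\pi)-\varepsilon}$ valid for all but finitely many integer polynomials of degree at most $2$, then confines the denominators of any infinite family, and a short comparison of exponents yields the bound $\mu(\pi+\pi^2)\leq 1+w_2(\pi)$. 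Because Mahler proved that $\pi$ is an $S$-number, so that the quadratic exponent $w_2(\pi)$ is finite, this already delivers the unconditional finiteness $\mu(\pi+\pi^2)<\infty$, together with the conditional estimate $\mu(\pi+\pi^2)\leq 1+w_2(\pi)$.

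The hard part --- and the reason the statement is recorded here only as a conjecture --- is that this transfer is irreducibly lossy. The polynomials $P(x)=qx^2+qx-p$ that encode rational approximations of $\beta$ form a codimension-one subfamily of all quadratics, since the coefficients of $x^2$ and of $x$ are forced equal; consequently the worst-case quadratic exponent $w_2(\pi)$ overcounts, and even the optimal value $w_2(\pi)=2$ expected for an $S$-number of type one would give only $\mu(\pi+\pi^2)\leq 3$, not the conjectured $2$. The sharp bound instead demands control of rational approximations to the single specific constant $\pi+\pi^2$ beyond the Dirichlet rate, and the machinery of this paper offers no leverage here: the irrational-limit test of Section \ref{s3288} records only whether a number lies in $\Q$ or in $\I$ and carries no information whatsoever about the rate of approximation. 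I therefore expect the honest deliverables to be the trivial lower bound $\mu\geq 2$, the unconditional finiteness $\mu<\infty$ inherited from Mahler's $S$-number theorem, and the conditional chain $w_2(\pi)=2\Rightarrow 2\leq\mu(\pi+\pi^2)\leq 3$; the exact equality $\mu(\pi+\pi^2)=2$ appears to be of the same order of difficulty as the open conjecture $\mu(\pi)=2$, and out of reach of present techniques.
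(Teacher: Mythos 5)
The statement you were asked to prove is recorded in the paper as a \emph{conjecture}: the paper offers no proof at all, only numerical evidence (the convergent data in Table \ref{t4055.06}, whose fourth column hovers near $2$). So there is no paper proof to compare against, and your assessment that the exact equality $\mu(\pi+\pi^2)=2$ is out of reach --- of the same calibre as the open problem $\mu(\pi)=2$ --- is exactly right. Your partial results are also sound. The lower bound $\mu(\pi+\pi^2)\geq 2$ follows from Dirichlet (Lemma \ref{lem2000.02}) once irrationality is known (Corollary \ref{cor4055.23}; or, independently of this paper's machinery, from Lindemann's theorem, since a rational value of $\pi+\pi^2$ would make $\pi$ a root of $qx^2+qx-p$). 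The transfer $\mu(\pi+\pi^2)\leq 1+w_2(\pi)$ via the polynomial $P(x)=qx^2+qx-p$ with $H(P)\asymp q$ is a correct computation: an approximation $|\beta-p/q|<q^{-\mu}$ gives $|P(\pi)|<q^{1-\mu}$, and a quadratic measure $|P(\pi)|\geq c\,H(P)^{-w_2(\pi)-\varepsilon}$ forces $\mu\leq 1+w_2(\pi)+\varepsilon$. Your observation that the transfer is irreducibly lossy is also apt: the constrained coefficient pattern (equal coefficients on $x^2$ and $x$) is invisible to a worst-case quadratic measure, so even the optimal value $w_2(\pi)=2$ yields only $\mu\leq 3$. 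You are likewise correct that the irrational limit test of Section \ref{s3288} is a $\{0,1\}$-valued class map carrying no quantitative approximation information, so nothing in the paper bears on the conjectured value.

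One attribution should be fixed: Mahler proved that $e$ is an $S$-number; for $\pi$ the $S$-versus-$T$ classification in Mahler's scheme remains open. What is actually known (Mahler 1953, later refined by Cijsouw and Waldschmidt) is an explicit transcendence measure for $\pi$ which gives $w_n(\pi)<\infty$ for each fixed degree $n$ --- in particular $w_2(\pi)<\infty$, which is all your argument uses. So your unconditional conclusion $\mu(\pi+\pi^2)<\infty$ and the conditional chain stand, but they should be sourced to the transcendence measure for $\pi$ rather than to an $S$-number claim.
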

A few values were computed to illustrate the prediction in this conjecture, see Table \ref{t4055.06}. The fourth column displays the numerical approximation $\mu_0(\pi+\pi^2)$ of the actual value $\mu(\pi+\pi^2)$.

\begin{table}[h!]
\centering
\caption{Numerical Data For Irrationality Measure $|p_n/q_n-e\pi|\geq q_n^{\mu(\pi+\pi^2)}$.} \label{t4055.06}
\begin{tabular}{l|l|l| l}
$n$&$p_n$&$q_n$&$\mu_0(\pi+\pi^2)$\\
\hline
1&$13$&   $1$   &$$\\
2&$1158$&  $89$   &$2.262270$\\
3&$3487$&   $268$   &$2.273061$\\
4&$15106$&  $1161$   &$2.193714$\\
5&$48805$&   $3751$   &$2.068191$\\
6&$63911$&  $4912$   &$2.130031$\\
7&$176627$&  $13575$   &$2.152449$\\
8&$593792$&   $45637$   &$2.031677$\\
9&$770419$&  $59212$   &$2.211219$\\
10&$7527563$&  $578545$   &$2.073701$\\ 
\end{tabular}
\end{table}

\section{Basic Diophantine Approximations Results} \label{s2000}
All the materials covered in this section are standard results in the literature, see \cite{HW08}, \cite{LS95}, \cite{NZ91}, \cite{RH94}, \cite{SJ05}, \cite{WM00}, et alii.

\subsection{Rationals And Irrationals Numbers Criteria} 
A real number \(\alpha \in \mathbb{R}\) is called \textit{rational} if \(\alpha = a/b\), where \(a, b \in \mathbb{Z}\) are integers. Otherwise, the number
is \textit{irrational}. The irrational numbers are further classified as \textit{algebraic} if \(\alpha\) is the root of an irreducible polynomial \(f(x) \in
\mathbb{Z}[x]\) of degree \(\deg (f)>1\), otherwise it is \textit{transcendental}.\\

\begin{lem} \label{lem2000.01} If a real number \(\alpha \in \mathbb{R}\) is a rational number, then there exists a constant \(c = c(\alpha )\) such that
\begin{equation}
\frac{c}{q}\leq \left|  \alpha -\frac{p}{q} \right|
\end{equation}
holds for any rational fraction \(p/q \neq \alpha\). Specifically, \(c \geq  1/b\text{ if }\alpha = a/b\).
\end{lem}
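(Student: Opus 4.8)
The statement is the standard fact that rational numbers are badly approximable: if $\alpha=a/b$ in lowest terms (or any representation with $b\geq 1$), then no distinct rational $p/q$ can be closer than roughly $1/q$. The plan is to reduce the whole statement to a single observation about integers: two distinct rationals with a common scale differ by at least the reciprocal of the product of their denominators. Concretely, I would write $\alpha=a/b$ with $a,b\in\Z$ and $b\geq 1$, and consider any fraction $p/q\neq\alpha$ with $q\geq 1$.

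The first step is to clear denominators. I would form the difference and bring it over the common denominator $bq$:
\begin{equation}
\left|\alpha-\frac{p}{q}\right|=\left|\frac{a}{b}-\frac{p}{q}\right|=\frac{|aq-bp|}{bq}.
\end{equation}
The second step is the crucial integrality observation: since $a,b,p,q$ are integers, the numerator $aq-bp$ is an integer. Moreover it is nonzero, because $aq-bp=0$ would force $a/b=p/q$, contradicting the hypothesis $p/q\neq\alpha$. A nonzero integer has absolute value at least $1$, so $|aq-bp|\geq 1$.

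The third step is to combine these to extract the claimed bound. From $|aq-bp|\geq 1$ we immediately get
\begin{equation}
\left|\alpha-\frac{p}{q}\right|=\frac{|aq-bp|}{bq}\geq\frac{1}{bq}=\frac{1/b}{q},
\end{equation}
so the lemma holds with the explicit constant $c=1/b$, which also verifies the final assertion $c\geq 1/b$ (in fact equality for this choice). Any admissible constant $c\leq 1/b$ then works as well, giving the stated existential form.

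There is essentially no obstacle here; the only point demanding a moment's care is ensuring the numerator is genuinely nonzero, which is exactly where the hypothesis $p/q\neq\alpha$ is used, and ensuring the representation is handled with $b\geq 1$ so that $bq>0$ and the inequality direction is preserved. I would also remark that the constant depends only on $\alpha$ through its denominator $b$, which is what the notation $c=c(\alpha)$ records; this is precisely the feature that distinguishes rationals from irrationals and will be contrasted with the Diophantine behavior of $e$ and $\pi^{v-u}$ used in the lattice-point lemmas.
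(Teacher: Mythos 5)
Your proof is correct and is the canonical argument: clearing denominators to get $|\alpha - p/q| = |aq-bp|/(bq)$ and invoking the fact that a nonzero integer has absolute value at least $1$. The paper states this lemma without proof, citing it as standard, so there is nothing to contrast with; your write-up supplies exactly the argument the cited references give, with the hypothesis $p/q\neq\alpha$ used in the right place to guarantee $aq-bp\neq 0$.
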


This is a statement about the lack of effective or good approximations for any arbitrary rational number \(\alpha \in \mathbb{Q}\) by other rational numbers. On the other hand, irrational numbers \(\alpha \in \mathbb{R}-\mathbb{Q}\) have effective approximations by rational numbers. If the complementary inequality \(\left|  \alpha -p/q \right| <c/q\) holds for infinitely many rational approximations \(p/q\), then it already shows that the real number \(\alpha \in \mathbb{R}\) is irrational, so it is sufficient to prove the irrationality of real numbers.

\begin{lem}[Dirichlet]\label{lem2000.02} 
 Suppose $\alpha \in \mathbb{R}$ is an irrational number. Then there exists an infinite
sequence of rational numbers $p_n/q_n$ satisfying
\begin{equation}
0 < \left|  \alpha -\frac{p_n}{q_n} \right|< \frac{1}{q_n^2}
\end{equation}
for all integers \(n\in \mathbb{N}\).
\end{lem}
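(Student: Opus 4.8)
The plan is to prove this classical statement of Dirichlet by the pigeonhole principle: first produce a single good approximation at each scale $Q$, and then leverage the irrationality of $\alpha$ to manufacture infinitely many distinct ones.

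First I would fix an integer $Q \geq 1$ and consider the $Q+1$ fractional parts $\{q\alpha\} = q\alpha - \lfloor q\alpha \rfloor$ for $q = 0, 1, \ldots, Q$, each lying in the half-open interval $[0,1)$. Partitioning $[0,1)$ into the $Q$ subintervals $[j/Q, (j+1)/Q)$ for $0 \leq j \leq Q-1$ and invoking the pigeonhole principle, two of these $Q+1$ points must fall into a common subinterval; say $\{q_1\alpha\}$ and $\{q_2\alpha\}$ with $0 \leq q_1 < q_2 \leq Q$. Setting $q = q_2 - q_1$ and $p = \lfloor q_2\alpha \rfloor - \lfloor q_1\alpha \rfloor$, the fact that both points lie in one subinterval of length $1/Q$ gives $|q\alpha - p| < 1/Q$ with $1 \leq q \leq Q$. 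Dividing through by $q$ and using $q \leq Q$ then yields
\begin{equation}
\left| \alpha - \frac{p}{q} \right| < \frac{1}{qQ} \leq \frac{1}{q^2},
\end{equation}
which is a single rational approximation of the required quality.

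The main work is then to upgrade this one approximation into an infinite sequence, and here is where the hypothesis that $\alpha$ is irrational becomes essential. For any rational $p/q$ the quantity $\delta = |\alpha - p/q|$ is strictly positive, since $\alpha \neq p/q$; choosing a fresh scale $Q'$ with $Q' > 1/\delta$ and repeating the pigeonhole construction produces a fraction $p'/q'$ with $|\alpha - p'/q'| < 1/Q' < \delta$, forcing $p'/q' \neq p/q$. Iterating this improvement, I obtain an infinite sequence of distinct rationals $p_n/q_n$, each satisfying the displayed inequality, and the strict lower bound $0 < |\alpha - p_n/q_n|$ is likewise immediate from irrationality.

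The step I expect to be the main obstacle is guaranteeing genuine \emph{infinitude}, that is, the distinctness of the terms $p_n/q_n$: a fixed denominator could in principle recur and stall the sequence. The remedy is to insist that the approximation error be driven strictly to zero, which the choice $Q' > 1/\delta$ accomplishes at every stage; this is precisely the point at which the irrationality of $\alpha$ is indispensable, since for a rational $\alpha$ the error $\delta$ could vanish and the construction would terminate.
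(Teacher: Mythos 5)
Your proof is correct. The paper itself offers no proof of this lemma --- it is stated in Section \ref{s2000} as a standard result with references to the literature --- and your argument is precisely the classical pigeonhole proof of Dirichlet's approximation theorem, including the one genuinely delicate step: using the irrationality of $\alpha$ to make the error $\delta=|\alpha-p/q|$ strictly positive, then choosing $Q'>1/\delta$ so that each new approximation is strictly better than all previous ones and the sequence of distinct fractions $p_n/q_n$ is genuinely infinite.
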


\begin{lem} \label{lem2000.03}   Let $\alpha=[a_0,a_1,a_2, \ldots]$ be the continued fraction of a real number, and let $\{p_n/q_n: n \geq 1\}$ be the sequence of convergents. Then
\begin{equation}
0 < \left|  \alpha -\frac{p_n}{q_n} \right|< \frac{1}{a_{n+1}q_n^2}
\end{equation}
for all integers \(n\in \mathbb{N}\).
\end{lem}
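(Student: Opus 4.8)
The plan is to derive an exact closed form for the approximation error $\alpha - p_n/q_n$ and then bound it, relying on three standard structural facts about the convergents: the recurrences $p_n = a_n p_{n-1} + p_{n-2}$ and $q_n = a_n q_{n-1} + q_{n-2}$; the determinant identity $p_n q_{n-1} - p_{n-1} q_n = (-1)^{n-1}$, which is an easy induction from those recurrences; and the representation of $\alpha$ through its complete quotient. Writing $\alpha_{n+1} = [a_{n+1}, a_{n+2}, \ldots]$ for the $(n+1)$-th complete quotient, the identity
\[
\alpha = \frac{\alpha_{n+1} p_n + p_{n-1}}{\alpha_{n+1} q_n + q_{n-1}}
\]
holds, since it is obtained by applying the very same recurrence with the real tail $\alpha_{n+1}$ substituted for the integer $a_{n+1}$.

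First I would insert this representation into $\alpha - p_n/q_n$ and clear denominators. The $\alpha_{n+1}$-terms in the numerator cancel, and what remains collapses via the determinant identity to $q_n p_{n-1} - p_n q_{n-1} = (-1)^n$, producing the clean form
\[
\alpha - \frac{p_n}{q_n} = \frac{(-1)^n}{q_n\left(\alpha_{n+1} q_n + q_{n-1}\right)},
\]
so that $\left|\alpha - p_n/q_n\right| = 1/\big(q_n(\alpha_{n+1} q_n + q_{n-1})\big)$. The upper bound then follows from two elementary observations: since $\alpha_{n+1} = a_{n+1} + 1/\alpha_{n+2} > a_{n+1}$, and since $q_{n-1} > 0$ for $n \geq 1$ (because $q_0 = 1$ and the $q_j$ are increasing positive integers), we obtain $\alpha_{n+1} q_n + q_{n-1} > a_{n+1} q_n$, whence
\[
\left|\alpha - \frac{p_n}{q_n}\right| < \frac{1}{a_{n+1} q_n^2}.
\]
For the strict lower bound $0 < \left|\alpha - p_n/q_n\right|$, it suffices to note that an infinite continued fraction $[a_0, a_1, \ldots]$ represents an irrational number, so $\alpha$ can never coincide with the rational convergent $p_n/q_n$.

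I expect no serious obstacle, as the result is classical and each step is routine bookkeeping. The only point requiring mild care is justifying the strict inequality $\alpha_{n+1} > a_{n+1}$: this needs the continued fraction to be genuinely infinite at stage $n+1$, i.e.\ $a_{n+2}$ to exist, and it is exactly this infiniteness that simultaneously secures the strict lower bound. Were one to extend the statement to rational $\alpha$, the terminating final convergent would have to be excluded; but in every application in this paper $\alpha$ is irrational, so this subtlety does not arise.
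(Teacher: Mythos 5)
Your proof is correct: the paper itself offers no proof of this lemma, only a citation to the standard references (Hardy--Wright Theorem 171, Khinchin Theorem 9, etc.), and your argument via the complete quotient $\alpha_{n+1}$, the determinant identity $p_nq_{n-1}-p_{n-1}q_n=(-1)^{n-1}$, and the bound $\alpha_{n+1}q_n+q_{n-1}>a_{n+1}q_n$ is exactly the classical proof given in those sources. Your closing remark about needing $\alpha$ irrational (so that $\alpha_{n+1}>a_{n+1}$ strictly and the convergent never equals $\alpha$) is a genuine hypothesis the paper's statement omits, and you are right that it is harmless in every application here.
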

This is standard in the literature, the proof appears in \cite[Theorem 171]{HW08}, \cite[Corollary 3.7]{SJ05}, \cite[Theorem 9]{KA97}, and similar references.

\begin{lem} \label{lem2000.05}\hypertarget{lem2000.05}   Let $\alpha=[a_0,a_1,a_2, \ldots]$ be the continued fraction of a real number, and let $\{p_n/q_n: n \geq 1\}$ be the sequence of convergents. Then
\begin{multicols}{2}
 \begin{enumerate} [font=\normalfont, label=(\roman*)]
\item$ \displaystyle 
 \frac{1}{2q_{n+1}q_n} \leq \left | \alpha - \frac{p_n}{q_n}  \right | \leq \frac{1}{q_n^{2}} 
$,
\item$\displaystyle
\frac{1}{2a_{n+1}q_n^2} \leq \left | \alpha - \frac{p_n}{q_n}  \right | \leq \frac{1}{q_n^{2}} 
$,
\end{enumerate}
\end{multicols}
for all integers \(n\in \mathbb{N}\).
\end{lem}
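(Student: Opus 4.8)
The plan is to reduce both double inequalities to a single exact identity for the approximation error, expressed through the complete quotients, and then to read off the four bounds by elementary estimates on the denominators $q_n$. Throughout I would use the standard machinery of convergents that is implicit in the preceding lemmas, so nothing beyond the recurrences and the determinant relation is needed.

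First I would recall the two basic facts about the convergents $p_n/q_n$ of $\alpha=[a_0,a_1,a_2,\ldots]$: the recurrences $q_n=a_nq_{n-1}+q_{n-2}$ and the determinant relation $p_nq_{n-1}-p_{n-1}q_n=(-1)^{n-1}$. Writing $\alpha_{n+1}=[a_{n+1},a_{n+2},\ldots]$ for the $(n+1)$-th complete quotient, the identity $\alpha=(\alpha_{n+1}p_n+p_{n-1})/(\alpha_{n+1}q_n+q_{n-1})$ yields, after clearing denominators and substituting the determinant relation, the exact error formula
\begin{equation}
\left|\alpha-\frac{p_n}{q_n}\right|=\frac{1}{q_n\left(\alpha_{n+1}q_n+q_{n-1}\right)}.
\end{equation}
Everything then follows from two-sided control of the denominator $\alpha_{n+1}q_n+q_{n-1}$, which is exactly where the bounds on the complete quotient $\alpha_{n+1}$ enter.

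Next I would use $a_{n+1}\leq\alpha_{n+1}<a_{n+1}+1$ (since $\alpha_{n+1}=a_{n+1}+1/\alpha_{n+2}$ with $\alpha_{n+2}>1$) together with the recurrence $q_{n+1}=a_{n+1}q_n+q_{n-1}$. For the upper bound common to both parts, the inequality $\alpha_{n+1}q_n+q_{n-1}>a_{n+1}q_n+q_{n-1}=q_{n+1}\geq q_n$ gives $|\alpha-p_n/q_n|<1/(q_nq_{n+1})\leq 1/q_n^2$. For the first lower bound I would estimate $\alpha_{n+1}q_n+q_{n-1}<(a_{n+1}+1)q_n+q_{n-1}=q_{n+1}+q_n\leq 2q_{n+1}$, using the monotonicity $q_n\leq q_{n+1}$, whence $|\alpha-p_n/q_n|>1/(2q_nq_{n+1})$. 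For the second lower bound I would instead bound the denominator directly by $\alpha_{n+1}q_n+q_{n-1}\leq 2a_{n+1}q_n$, extracting the factor $a_{n+1}$ from the recurrence and absorbing the remaining terms $q_n$ and $q_{n-1}$ using $q_{n-1}\leq q_n$ and $a_{n+1}\geq 1$.

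The main obstacle is the bookkeeping of the constants in the two lower bounds: the factor $2$ must be produced cleanly, and the second estimate quietly leans on the interplay of $a_{n+1}$, $q_{n-1}$, and $q_n$ rather than on a single crude inequality. I would therefore verify the base case $n=1$ separately, where $q_0=1$ and the monotonicity $q_{n-1}\leq q_n$ is sharpest (equality can occur when $a_1=1$), and confirm that $\alpha_{n+2}>1$ at every step so that the strict bound $\alpha_{n+1}<a_{n+1}+1$ is always available. Once the denominator $\alpha_{n+1}q_n+q_{n-1}$ is pinned between $q_{n+1}$ and the appropriate multiple of $q_{n+1}$ (resp.\ of $a_{n+1}q_n$), both stated inequalities drop out of the error formula.
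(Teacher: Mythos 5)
The paper gives no proof of this lemma at all; it only records the recurrence $q_{n+1}=a_{n+1}q_n+q_{n-1}$ and cites Olds and Khinchin, so your argument can only be measured against the standard treatment. Your route through the exact error formula $\left|\alpha-p_n/q_n\right|=1/\bigl(q_n(\alpha_{n+1}q_n+q_{n-1})\bigr)$ is the right one, and it correctly delivers the common upper bound $1/(q_nq_{n+1})\le 1/q_n^2$ and the lower bound of part (i), via $\alpha_{n+1}q_n+q_{n-1}<(a_{n+1}+1)q_n+q_{n-1}=q_{n+1}+q_n\le 2q_{n+1}$.

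The gap is in part (ii). The inequality you need, $\alpha_{n+1}q_n+q_{n-1}\le 2a_{n+1}q_n$, cannot be obtained by ``absorbing'' $q_{n-1}\le q_n$ and $a_{n+1}\ge 1$: those estimates only give $\alpha_{n+1}q_n+q_{n-1}<(a_{n+1}+2)q_n$, and $a_{n+1}+2\le 2a_{n+1}$ holds only when $a_{n+1}\ge 2$. When $a_{n+1}=1$ the claimed bound on the denominator is simply false: for the golden ratio $\phi=[1;1,1,\ldots]$ one has $\alpha_{n+1}=\phi$ and $q_{n-1}/q_n\to 1/\phi$, so $\alpha_{n+1}q_n+q_{n-1}\to\sqrt{5}\,q_n\approx 2.236\,q_n>2q_n=2a_{n+1}q_n$. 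Indeed part (ii) of the lemma fails as stated: with $p_4/q_4=8/5$ one computes $|\phi-8/5|=0.01803\ldots<1/50=1/(2a_5q_4^2)$. The constant your method (and the cited sources) actually yields is $1/\bigl((a_{n+1}+2)q_n^2\bigr)$, hence at best $1/(3a_{n+1}q_n^2)$; no rearrangement of the bookkeeping will produce the factor $2$. Since only part (i) is used elsewhere in the paper (in Lemmas \ref{lem4422.700} and \ref{lem4422.800}), the defect lies in the statement of part (ii) rather than in your overall strategy, but as written your second lower bound does not go through. You should also assume $\alpha$ irrational, since for rational $\alpha$ the expansion terminates and the lower bounds fail at the last convergent.
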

The recursive relation $q_{n+1}=a_{n+1}q_n+q_{n-1}$ links the two inequalities. Confer \cite[Theorem 3.8]{OC63}, \cite[Theorems 9 and 13]{KA97}, et alia. The proof of the best rational approximation stated below, appears in \cite[Theorem 2.1]{RH94}, and \cite[Theorem 3.8]{SJ05}. 
\begin{lem} \label{lem2000.07} \hypertarget{lem2000.07}    Let $\alpha \in \R$ be an irrational real number, and let $\{p_n/q_n: n \geq 1\}$ be the sequence of convergents. Then, for any rational number $p/q \in \Q^{\times}$, 
\begin{multicols}{2}
 \begin{enumerate}[font=\normalfont, label=(\roman*)]
\item$ \displaystyle 
 \left | \alpha q_n - p_n  \right | \leq  \left | \alpha q -p  \right |
$,
\item$\displaystyle
 \left | \alpha - \frac{p_n}{q_n}  \right | \leq  \left | \alpha - \frac{p}{q}  \right |
$,
\end{enumerate}
\end{multicols}

for all sufficiently large \(n\in \mathbb{N}\) such that $q \leq q_n$.
\end{lem}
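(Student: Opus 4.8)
The plan is to prove part (1)---the ``best approximation of the second kind'' inequality $\left|\alpha q_n-p_n\right|\leq\left|\alpha q-p\right|$---and then deduce part (2) from it in one line. Indeed, once (1) is available, using $\left|\alpha-p/q\right|=\left|\alpha q-p\right|/q$ together with the hypothesis $q\leq q_n$ gives $\left|\alpha-p_n/q_n\right|=\left|\alpha q_n-p_n\right|/q_n\leq\left|\alpha q-p\right|/q_n\leq\left|\alpha q-p\right|/q=\left|\alpha-p/q\right|$, which is exactly (2). So all the real work sits in (1).

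For (1), abbreviate $\delta_k=\alpha q_k-p_k$. The engine is the unimodular determinant identity $p_nq_{n-1}-p_{n-1}q_n=(-1)^{n-1}$ for consecutive convergents. Because the integer matrix with rows $(p_n,p_{n-1})$ and $(q_n,q_{n-1})$ has determinant $\pm1$, the linear system $p=up_n+vp_{n-1}$, $q=uq_n+vq_{n-1}$ has a unique integer solution $(u,v)\in\Z^2$, given by $u=\pm(pq_{n-1}-qp_{n-1})$ and $v=\pm(qp_n-pq_n)$. Substituting into $\alpha q-p$ yields the identity $\alpha q-p=u\,\delta_n+v\,\delta_{n-1}$, which reduces the whole problem to controlling the signs of $u$ and $v$.

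The sign analysis rests on two standard facts about convergents that I would quote from the surrounding material (Lemma \ref{lem2000.05} together with the alternation of the convergents about $\alpha$): first, $\delta_n$ and $\delta_{n-1}$ carry opposite signs; second, the moduli $\left|\delta_k\right|$ are strictly decreasing, so $\left|\delta_{n-1}\right|>\left|\delta_n\right|$. Assuming $u\neq0$ and $v\neq0$, the constraint $1\leq q=uq_n+vq_{n-1}\leq q_n$ forces $u$ and $v$ to have opposite signs, since a common sign would make $\left|q\right|\geq q_n+q_{n-1}>q_n$ (both positive) or $q<0$ (both negative). Opposite signs of $u,v$ combined with opposite signs of $\delta_n,\delta_{n-1}$ make $u\,\delta_n$ and $v\,\delta_{n-1}$ share a common sign, so there is no cancellation and $\left|\alpha q-p\right|=\left|u\right|\left|\delta_n\right|+\left|v\right|\left|\delta_{n-1}\right|\geq\left|u\right|\left|\delta_n\right|\geq\left|\delta_n\right|=\left|\alpha q_n-p_n\right|$, using $\left|u\right|\geq1$. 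The two degenerate cases are quick: $v=0$ forces $u=1$ and $p/q=p_n/q_n$, giving equality, while $u=0$ gives $\left|\alpha q-p\right|=\left|v\right|\left|\delta_{n-1}\right|\geq\left|\delta_{n-1}\right|>\left|\delta_n\right|$.

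The main obstacle is precisely the sign bookkeeping of the last paragraph: everything hinges on pinning down that $u$ and $v$ must be of opposite sign under the hypothesis $q\leq q_n$, and that this opposition cancels against the alternation of the $\delta_k$ so that the two terms reinforce rather than cancel. Stating the alternation and monotonicity facts cleanly, and disposing of the small-$n$ and degenerate $u=0$ or $v=0$ cases, is where care is needed; the unimodular change of basis and the algebraic identity $\alpha q-p=u\,\delta_n+v\,\delta_{n-1}$ are then routine.
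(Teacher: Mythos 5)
Your proof is correct, and it is the classical argument for the best-approximation property of convergents (unimodular change of basis to $(u,v)$, the sign dichotomy forced by $1\leq q\leq q_n$, and the alternation plus monotonicity of $\delta_k=\alpha q_k-p_k$). The paper itself supplies no proof of this lemma --- it only cites \cite[Theorem 2.1]{RH94} and \cite[Theorem 3.8]{SJ05} --- and your argument is essentially the one found in those standard references, with the ``sufficiently large $n$'' hypothesis absorbing the small-$n$ edge cases you flag.
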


\begin{thm} {\normalfont(Kronecker approximation theorem)}\label{thm2000.990} Let $\alpha, \beta \in \R^{\times}$ be real numbers, and $\alpha$ irrational. Given a small number $\varepsilon >0$, there exists infinitely many pairs of integers $p,q \in \N$ such that 
\begin{equation}
 \left|  \alpha q - p -\beta \right|< \varepsilon.
\end{equation}
\end{thm}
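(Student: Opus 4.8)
The plan is to reduce the inhomogeneous inequality $|\alpha q - p - \beta| < \varepsilon$ to the homogeneous fact that $\alpha q - p$ can be made nonzero yet arbitrarily small, and then to exploit the additive structure of the integer multiples of one such small value in order to trap the arbitrary target $\beta$. Equivalently, this amounts to showing that the orbit $\{q\alpha \bmod 1 : q \in \Z\}$ is dense in $[0,1)$ and then translating by a suitable integer $p$.

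First I would apply Dirichlet's theorem, Lemma \ref{lem2000.02}: since $\alpha$ is irrational there is an infinite sequence of convergents $p_n/q_n$ with $0 < |\alpha - p_n/q_n| < 1/q_n^2$, and the denominators are unbounded. Writing $\delta_n = \alpha q_n - p_n$, this gives $0 < |\delta_n| < 1/q_n \to 0$. Hence, given $\varepsilon > 0$, I may fix any index $n$ large enough that $0 < |\delta_n| < \varepsilon$.

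Next comes the gap-filling step, which is the conceptual heart. The integer multiples $\{k\delta_n : k \in \Z\}$ form an arithmetic progression whose consecutive terms are spaced exactly $|\delta_n|$ apart; since this spacing is less than $\varepsilon$, the progression is $\varepsilon$-dense in $\R$, so there is an integer $k$ with $|k\delta_n - \beta| < |\delta_n| < \varepsilon$. Because $k\delta_n = \alpha(kq_n) - (kp_n)$, setting $q = kq_n$ and $p = kp_n$ yields the required $|\alpha q - p - \beta| < \varepsilon$. To obtain infinitely many pairs I would let $n$ range over the infinitely many indices with $|\delta_n| < \varepsilon$; since $q_n \to \infty$, the resulting denominators $q = kq_n$, and hence the pairs $(p,q)$, are all distinct.

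The main obstacle is minor, the result being classical: it is the bookkeeping that simultaneously guarantees infinitude and respects the sign convention $p,q \in \N$. I would fix the sign of the multiplier $k$ so that $q = kq_n > 0$; for the positive irrationals $\alpha$ that occur in the applications --- namely $e$ and $\pi^{v-u}$ of Section \ref{s4422} --- one has $p = kp_n > 0$ as well once $q$ is large, and in every case the construction delivers the approximation with $q \neq 0$, which is exactly what the lattice-point lemmas of that section invoke.
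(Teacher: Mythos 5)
The paper does not actually prove Theorem \ref{thm2000.990}: it states the Kronecker approximation theorem as a classical result and points to the literature (\cite{AP94}, \cite{GM16}), so there is no internal proof to compare yours against. Your argument is the standard one-dimensional proof and its skeleton is sound: Dirichlet's theorem (Lemma \ref{lem2000.02}) gives $\delta_n=\alpha q_n-p_n$ with $0<|\delta_n|<1/q_n\to 0$; the multiples $k\delta_n$ are $|\delta_n|$-dense in $\R$; and the identity $k\delta_n=\alpha(kq_n)-kp_n$ converts the multiple nearest to $\beta$ into the desired pair $(p,q)$, with infinitude obtained by letting $n$ grow.

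The one step that does not work as written is ``I would fix the sign of the multiplier $k$ so that $q=kq_n>0$.'' You cannot choose the sign of $k$: the nearest-multiple condition $|k\delta_n-\beta|<|\delta_n|$ forces $k$ to lie within $1$ of $\beta/\delta_n$, so its sign is dictated by the signs of $\beta$ and $\delta_n$. If $\beta>0$ while $\delta_n<0$ (or vice versa), the required $k$ is negative and $q=kq_n\notin\N$. The standard repair is to use the alternating character of the convergents: $\alpha q_n-p_n$ changes sign with the parity of $n$, so among the infinitely many admissible indices you may always restrict to those for which $\delta_n$ has the same sign as $\beta$, which makes $k\geq 1$ and $q=kq_n\in\N$. (The requirement $p\in\N$ is then automatic for $\alpha>0$ once $q$ is large, which covers every invocation of the theorem in Section \ref{s4422}; for $\alpha<0$ the statement as printed is only sensible with $p\in\Z$ anyway.) With that substitution your proof is complete and supplies a genuine argument where the paper offers only a citation.
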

The $n$th dimensional version and related problems are studied in \cite{AP94}, \cite{GM16}, and similar references.

\subsection{ Irrationalities Measures }
 
The concept of measures of irrationality of real numbers is discussed in \cite[p.\ 556]{WM00}, \cite[Chapter 11]{BB87}, et alii. This concept can be approached from several points of views. 

\begin{dfn} \label{dfn2000.01} {\normalfont The irrationality measure $\mu(\alpha)$ of a real number $\alpha \in \R$ is the infimum of the subset of  real numbers $\mu(\alpha)\geq1$ for which the Diophantine inequality
\begin{equation} \label{eq597.36}
  \left | \alpha-\frac{p}{q} \right | \ll\frac{1}{q^{\mu(\alpha)} }
\end{equation}
has finitely many rational solutions $p$ and $q$. Equivalently, for any arbitrary small number $\varepsilon >0$
\begin{equation} \label{eq597.36b}
  \left | \alpha-\frac{p}{q} \right | \gg\frac{1}{q^{\mu(\alpha)+\varepsilon} }
\end{equation}
for all large $q \geq 1$.
}
\end{dfn}




\end{document}